\theoremstyle{definition}
\newtheorem{thm}{Theorem}
\newtheorem{lemma}[thm]{Lemma}
\newtheorem{proposition}[thm]{Proposition}
\newtheorem{ex}[thm]{Example}
\newtheorem*{thm*}{Theorem}
\newcommand{\ZZ}{{\mathbb Z}}
\title[Obstructions to almost complex structures following Massey]{Obstructions to almost complex structures following Massey}
\begin{document}
\author{Michael Albanese}
\address{University of Adelaide, School of Computer and Mathematical Sciences, Level 1, Ingkarni Wardli Building, North Terrace Campus, 5005 Australia}
\email{michael.albanese@adelaide.edu.au}
\author{Aleksandar Milivojevi\'c}
\address{University of Waterloo, Faculty of Mathematics, 200 University Ave W, Waterloo, N2L 3G1, Ontario, Canada}
\email{amilivoj@uwaterloo.ca}
%\address{University of Waterloo}
%\email{amilivoj@uwaterloo.ca}

\subjclass[2020]{32Q60, 55S35, 53C15}
\keywords{Almost complex structures, obstruction theory}
 
 \begin{abstract} We provide proofs of two theorems stated by Massey in 1961, concerning the obstructions to finding complex structures on real vector bundles. In addition, we determine the second obstruction to a complex structure on a rank six orientable real vector bundle. The obstructions are fractional parts of integral Stiefel--Whitney classes, and a fourth of an appropriate combination of Pontryagin, Chern, and Euler classes. \end{abstract}

\maketitle

\section{Introduction}

%\iffalse

In a 1961 paper, Massey \cite{M61} studied the question: When does a real vector bundle admit the structure of a complex vector bundle? In particular, when does a manifold admit an almost complex structure?

%In a 1961 paper, Massey \cite{M61} studied the obstructions to reducing the structure group of an oriented real vector bundle over a CW complex to the unitary group. This includes as a special case of interest the obstructions to finding an almost complex structure on a smooth oriented $2n$--manifold.

This problem has a long and rich history. As a starting point, one observes that on a complex vector bundle, certain relations must hold on the level of characteristic classes. First of all, since the Chern classes $c_i$ reduce modulo two to the even-indexed Stiefel--Whitney classes $w_{2i}$, one sees that the odd--indexed integral Stiefel--Whitney classes $W_{2i+1}$ must vanish, being the integral Bockstein of $w_{2i}$, and in particular $w_{2i+1}$ must vanish. Furthermore, the Pontryagin classes are related to the Chern classes by the formula $$ 1-p_1+p_2- \cdots = (1-c_1 + c_2 - \cdots )(1+c_1+c_2+ \cdots),$$ yielding $$(-1)^k p_k = \sum_{i+j = 2k} (-1)^i c_ic_j.$$ In addition, the top degree Chern class $c_n$ on a complex rank $n$ vector bundle coincides with its Euler class. 

With only this in hand, one can already show that spheres $S^{4k}$ in dimensions divisible by four admit no almost complex structure: being stably parallelizable, the Pontryagin classes vanish and hence so do the Chern classes, which would imply, in particular, that the Euler characteristic vanishes. To additionally rule out almost complex structures on spheres of dimension $4k+2$, for $k \geq 2$, Borel--Serre employed the cup product and Steenrod algebra structure on the cohomology of the classifying space $BU$ \cite[IV.15]{BS53}. Alternatively, one can reach the same conclusion using the integrality of the Chern character from $K$-theory to rational cohomology for spheres, due to Bott periodicity. Further $K$-theoretic computation shows that a product of even-dimensional spheres is almost complex exactly when it is a product of copies of $S^2, S^6, S^2 \times S^4$ \cite[Section 3]{S65}. Computations involving the Chern character were also used by Massey \cite{M62} to prove that quaternionic projective spaces $\mathbb{HP}^n$, for $n \geq 2$, are not almost complex. In low dimensions, we have a complete understanding of when a given orientable manifold admits an almost complex structure. For example, in dimension 6, the sole condition is that $W_3 = 0$ \cite[section 6]{E52}. The criterion in dimension 4 is a theorem of Wu \cite[Théorème IV.10]{W52}.

% Further calculation with the Chern character also shows that products of two spheres can only be almost complex if they are either both odd-dimensional, or one of $S^2 \times S^2$, $S^2 \times S^4$, $S^2 \times S^6$, $S^6 \times S^6$ \cite[Section 3]{S65}.

In many cases, what is preventing the existence of an almost complex structure is the inability to select classes $c_i$ which would be the Chern classes of the putative almost complex structure and hence satisfy the above relations. To systematically investigate this observation, one uses obstruction theory, whereby one identifies sufficient, not just necessary, conditions to finding an almost complex structure. By the time of Massey's paper \cite{M61}, it was understood how to relate the obstructions arising in the problem of lifting through the map $BU(n) \to BSO(2n)$ to the cohomology classes classifying the stages of its Moore--Postnikov system. This approach was developed by Hermann \cite{He59, He60}, whose technique Massey employed \cite[Section 3]{M61}. Finding a section of the $SO(2n)/U(n)$ bundle over a certain skeleton of the CW complex is tantamount to lifting the classifying map of the tangent bundle to a certain stage of the Moore--Postnikov system of the map $BU(n) \to BSO(2n)$. 

Massey thus observed that even though the existence of classes $c_i$, reducing mod 2 to $w_{2i}$, such that $1-p_1+p_2- \cdots = (1-c_1 + c_2 - \cdots )(1+c_1+c_2+ \cdots)$, is a necessary condition for having an almost complex structure, the actual obstructions in the relevant degrees are generally classes such that the vanishing of some non-trivial multiple of them corresponds to the identities $W_{2i+1} = 0$ and $(-1)^k p_k = \sum_{i+j = 2k} (-1)^i c_ic_j$. More precisely:

\begin{thm*}\cite[Theorems I and II]{M61} Let $\xi \to X$ be an orientable real vector bundle of rank $2n$, whose structure group we assume has been reduced to $SO(2n)$, over a CW complex $X$. Consider the associated $SO(2n)/U(n)$ bundle $\theta$, sections of which correspond to reductions of the structure group of $\xi$ to $U(n)$. If $s$ is a section of $\theta$ over the $q$--skeleton of $X$, then the obstruction to extending over the $(q+1)$--skeleton is a cohomology class $\mathfrak{o}_{q+1} \in H^{q+1}(X; \pi_{q} SO(2n)/U(n))$. We have the following: \begin{enumerate} \item If $q = 4k+2$ and $q < 2n-1$, then $$W_{4k+3}(\xi) = \begin{cases}
(2k)!\, \mathfrak{o}_{4k+3} \ \ \ \ &k \, \mathrm{even},\\
\tfrac{1}{2}(2k)!\, \mathfrak{o}_{4k+3} \ \ \ \ &\, k \, \mathrm{odd}.
\end{cases}$$
\item Assume $n = 2k$ and $q = 2n-1 = 4k-1$. The obstruction $\mathfrak{o}_{2n} = \mathfrak{o}_{4k}$ is an integral class if $k$ is odd, while if $k$ is even, it is a pair consisting of an integral class and a mod 2 class. If $k$ is odd, then $$\sum_{i+j = 2k} (-1)^i c_ic_j - (-1)^k p_k = 4 \mathfrak{o}_{2n},$$ while for $k$ even, this same formula holds with $\mathfrak{o}_{2n}$ replaced by its integral component. In the formula, $c_0, \ldots, c_{n-1}$ are the Chern classes of the induced $U(n)$ bundle over the $(2n-1)$--skeleton of $X$, while $c_n$ denotes the Euler class of $\xi$. \end{enumerate} \end{thm*}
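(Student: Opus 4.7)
The plan is to follow Hermann's approach, as Massey already indicates: realize each obstruction as the pullback of a universal $k$--invariant of the Moore--Postnikov tower of $BU(n) \to BSO(2n)$, and then identify that $k$--invariant with a classical characteristic class by exploiting its vanishing on $BU(n)$. As preparation, one computes the relevant homotopy groups of the fiber $F = SO(2n)/U(n)$ from the fibration $U(n) \to SO(2n) \to F$ and Bott periodicity: stably, $\pi_{4k+2} F \cong \ZZ$, detected through the inclusion of $\pi_{4k+1}U(n)$; unstably at $n=2k$, $\pi_{4k-1}F$ is $\ZZ$ for $k$ odd and $\ZZ \oplus \ZZ/2$ for $k$ even, with the extra $\ZZ/2$ summand traced to an unstable class in $\pi_{4k-1}SO(4k)$.

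For part (1) the universal $k$--invariant is a class $\kappa \in H^{4k+3}(BSO(2n);\ZZ)$ that must restrict to $0$ on $BU(n)$. In the stable range, $H^{4k+3}(BSO(2n);\ZZ)$ is entirely $2$--torsion, so every class in it pulls back to $0$ on the torsion-free ring $H^*(BU(n);\ZZ)$; a primitivity analysis in the Moore--Postnikov tower then forces $\kappa$ to be an integer multiple of the natural generator $W_{4k+3}$. The multiple is extracted by pairing both sides with the generator of $\pi_{4k+2}BU$: Bott's integrality theorem gives that $c_{2k+1}$ evaluates to $\pm(2k)!$ on this generator, while the image of $\pi_{4k+1}U(n) \to \pi_{4k+1}SO(2n)$ has index $1$ or $2$ according to whether $\pi_{4k+1}SO$ vanishes (the parity-of-$k$ dichotomy from Bott periodicity), which combines to produce the stated factors $(2k)!$ and $\tfrac{1}{2}(2k)!$.

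For part (2) the $k$--invariant lies in $H^{4k}(E_{4k-1};\pi_{4k-1}F)$, where $E_{4k-1}$ is the penultimate stage of the tower, over which $\xi$ has acquired Chern classes $c_0,\ldots,c_{2k-1}$ alongside its Pontryagin and Euler classes. The integral component of the universal obstruction must vanish when pulled back to $BU(2k)$; modulo relations already exploited at earlier stages, the only primitive integral class of degree $4k$ on $E_{4k-1}$ with this property is, up to a scalar, the Pontryagin--Chern expression $\sum_{i+j=2k}(-1)^i c_ic_j - (-1)^k p_k$ (with $c_{2k}$ read as the Euler class of $\xi$). The scalar is then pinned down by evaluating on a universal example where the relevant cohomology is free and the generator of the $\ZZ$ summand of $\pi_{4k-1}F$ can be identified directly, yielding the factor $4$. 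For $k$ even, the additional $\ZZ/2$ component of $\mathfrak{o}_{4k}$ is read off from an unstable Stiefel--Whitney obstruction corresponding to the $\ZZ/2$ summand of $\pi_{4k-1}F$, in exact parallel with part (1).

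The principal difficulty is pinning down the numerical factors $(2k)!$, $\tfrac{1}{2}(2k)!$, and $4$. Whereas identifying the $k$--invariant as a scalar multiple of the classical characteristic class is essentially forced by the structure of $H^*(BSO;\ZZ)$ relative to $H^*(BU;\ZZ)$, determining the scalar itself requires a careful comparison of generators and rests on Bott's integrality theorem together with local $2$--primary information in the Pontryagin--Chern relation.
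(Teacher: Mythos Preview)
Your proposal contains a genuine conceptual error in Part (1) that would block the argument. You place the universal $k$--invariant $\kappa$ in $H^{4k+3}(BSO(2n);\ZZ)$, but this is only correct for the \emph{primary} obstruction. The $k$--invariant governing extension over the $(4k+3)$--skeleton lives on the intermediate stage $E_{4k+1}$ of the Moore--Postnikov tower, i.e.\ $\kappa \in H^{4k+3}(E_{4k+1};\ZZ)$, and this group is not the 2--torsion group you describe. More seriously, you have the divisibility backwards: the theorem asserts $W_{4k+3} = \ell\,\mathfrak{o}_{4k+3}$, not $\mathfrak{o}_{4k+3} = \ell\, W_{4k+3}$. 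Since $W_{4k+3}$ is 2--torsion, ``integer multiple of $W_{4k+3}$'' cannot produce factors like $(2k)!$; rather, the obstruction $\mathfrak{o}_{4k+3}$ is a genuinely finer (non-torsion) class on $E_{4k+1}$, of which the pullback of $W_{4k+3}$ is the $\ell$-th multiple. Your Bott-integrality idea for extracting the factor is suggestive, but as stated it is attached to the wrong relation.

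The paper's approach fixes this by introducing a \emph{third} fibration, $BSO(4k+2)\to BSO(2n)$, for which $W_{4k+3}$ \emph{is} the primary obstruction (hence genuinely a class on the base). One then maps both towers to the intermediate tower for $BU(2k+1)\to BSO(2n)$: the map from the $U(n)$--tower is a homotopy equivalence through stage $4k+1$, while the map to the $SO(4k+2)$--tower induces multiplication by $\ell$ on $\pi_{4k+2}$ of the fibers. The factor $\ell$ itself is read off from the long exact sequence of $SO(4k+2)/U(2k+1)\to SO(2n)/U(2k+1)\to SO(2n)/SO(4k+2)$, using Harris's computation of the first unstable group $\pi_{4k+1}\bigl(SO(4k+2)/U(2k+1)\bigr)$; naturality of the transgression then yields $W_{4k+3}=\pm\ell\,\mathfrak{o}_{4k+3}$. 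For Part (2), your sketch omits the key technical input: the paper uses Wu's Pontryagin-square formula $\mathfrak{P}(w_{2k})=\rho_4(p_k)+\theta_2(\sum w_{2i}w_{4k-2i})$ to show that $\sum(-1)^i c_ic_{2k-i}-(-1)^k p_k$ has vanishing mod~4 reduction on $E_{4k-2}$, which is what forces the factor to be a multiple of $4$; evaluation on the tangent bundle of $S^{4k}$ (your ``universal example'') then pins it to exactly $4$.
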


In full generality, this does not address all the obstructions to finding an almost complex structure, and it is unclear whether the remaining obstructions are easily expressed in terms of characteristic classes.

Given that all the odd-indexed integral Stiefel--Whitney classes vanish on an almost complex manifold, one might wonder whether the classes $W_{4k+1}$ appear in certain obstructions. However, as we will see in \Cref{integralSW}, their vanishing is ensured by the vanishing of the classes $W_{4k+3}$. 

As an example of statement (1) above, the obstruction to extending an almost complex structure on the ten--skeleton of a twelve--manifold over its eleven--skeleton is not $W_{11}$, but rather a class $\mathfrak{o}_{11}$ satisfying $W_{11} = 24\mathfrak{o}_{11}$. Likewise, as an example of statement (2), four times the obstruction to extending an almost complex structure on the three--skeleton of a four--manifold over the four--skeleton (i.e., the whole manifold) is $p_1 - c_1^2 + 2e$, where $e$ is the Euler class. Of course, on an orientable four--manifold, this class vanishes if and only if $p_1 - c_1^2 + 2e$ vanishes, which is part of the aforementioned theorem of Wu, but on a general CW complex this need not be the case. 

Though Massey included remarks in \cite{M61} on the ingredients of the proofs of the above results, the proofs are not provided, and we take this opportunity to do so. Massey also addressed a particular obstruction not covered by the above, namely the degree eight obstruction to extending an almost complex structure on the seven--skeleton over the eight--skeleton, in the case of $2n \geq 10$ \cite[Theorem III]{M61}. This mod 2 obstruction to a stable almost complex structure, in the case of closed manifolds, was later studied in detail by Thomas \cite[Theorem 1.2]{T67} and Heaps \cite[\S 1]{H60}, and we will touch upon it only briefly in the last section.

It is wholly possible that some of the arguments we write out here for proving \cite[Theorems I and II]{M61} were not the ones Massey had in mind, particularly since we do not make use of any of the general statements (a)--(f) in \cite[Section 3]{M61} listed as ingredients of the proofs of \cite[Theorems I--III]{M61}, though perhaps these were mostly intended for the proof of \cite[Theorem III]{M61}. Our method, which only employs technology known at the time of \cite{M61}, is inspired by an argument of Kervaire used in \cite[Lemma 1.1]{K59}, which is mentioned in \cite[Remark 3]{M61}. %as a result that \cite[Theorem I]{M61} ``bears a slight similarity to''.

\subsection*{Acknowledgements} We would like to thank Mark Grant for providing helpful references, together with the anonymous referee for comments that improved the exposition of the paper.

\section{Moore--Postnikov systems}

We begin with some general topological preliminaries we will make use of heavily, namely Postnikov systems of spaces, and their relative versions which we refer to as Moore--Postnikov systems.

\subsection{The Postnikov system of a space}\label{ordinarycase} For the content of this subsection, we refer the reader to e.g. \cite[Chapter IX]{Wh}. Given a simply connected CW complex $F$, there is a sequence of spaces $F_n$, $n\geq 2$, maps $F \xrightarrow{g_n} F_n$, and maps $F_n \xrightarrow{f_n} F_{n-1}$, such that:

\begin{itemize} \item $\pi_{>n} F_n = 0$.
\item The map $F \xrightarrow{g_{n}} F_n$ induces an isomorphism on $\pi_{\leq n}$.
\item The map $F_n \xrightarrow{f_n} F_{n-1}$ is a principal fibration with fiber $K(\pi_n F, n)$, classified by a map $F_{n-1} \to K(\pi_n F, n+1)$. The corresponding element in $H^{n+1}(F_{n-1}; \pi_n)$ is called the $k$--invariant in degree $n+1$.
\item $f_n \circ g_n$ is homotopic to $g_{n-1}$.
\end{itemize}

$$\begin{tikzcd}
                          & F \arrow[d, "g_n"]           \\
                          & F_n \arrow[d, "f_n"] \\ &\vdots \arrow[d, "f_5"]      \\
{K(\pi_4 F, 4)} \arrow[r] & F_4 \arrow[d, "f_4"]         \\
{K(\pi_3 F, 3)} \arrow[r] & F_3 \arrow[d, "f_3"]         \\
                          & {F_2 = K(\pi_2 F, 2)}
\end{tikzcd}$$

This data is referred to as a \emph{Postnikov system} for the space $F$. We refer to $F \xrightarrow{g_n} F_n$ as the $n^\mathrm{th}$ Postnikov map of $F$. Denoting the homotopy fiber of $F \xrightarrow{g_n} F_n$ by $F_n'$, we have $\pi_{\leq n} F_n' = 0$ and the map $F_n' \to F$ induces an isomorphism on $\pi_{>n}$. A map $X \to F_{n-1}$ lifts through $f_n$ if and only if the pullback of the $k$-invariant in $H^{n+1}(F_{n-1}; \pi_n)$ to $H^{n+1}(X; \pi_n)$ is the zero class.

\subsection{The Moore--Postnikov system of a map}\label{moorepostnikov} For the content of this subsection, we refer the reader to \cite{He60}, in particular Section 7, and to \cite[Chapter II.8]{M}, \cite[Chapter IX.6]{Wh}, \cite[Chapter 5.3]{Ba}. 

Consider a fibration consisting of simply connected CW complexes $F \to E \xrightarrow{p} B$. Define $E_1 = 
B$. There are spaces $E_n$, $n\geq 2$, and fibrations $F_n \to E_n \xrightarrow{p_n} B$, $F_n' \to E \xrightarrow{h_n} E_n$, and $K(\pi_n F, n) \to E_n \xrightarrow{e_n} E_{n-1}$ such that

\begin{itemize} \item the diagram

\begin{equation}\label{square} \begin{tikzcd}
E \arrow[d, "p"] \arrow[r, "h_n"] & E_n \arrow[d, "p_n"] \\
B \arrow[r, "id"]  & B                   
\end{tikzcd}\end{equation} commutes (up to homotopy), and the induced map of fibers $F \to F_n$ is an $n^\mathrm{th}$ Postnikov map for $F$ as described in \Cref{ordinarycase}. In particular, the Moore--Postnikov system of a constant map to a point induces a Postnikov system for its domain.

\item $E_n \xrightarrow{e_n} E_{n-1}$ is a principal $K(\pi_n F, n)$ fibration such that the diagram

$$\begin{tikzcd}
F_n \arrow[d] \arrow[r, "f_n"]        & F_{n-1} \arrow[d]            \\
E_n \arrow[d, "p_n"] \arrow[r, "e_n"] & E_{n-1} \arrow[d, "p_{n-1}"] \\
B \arrow[r, "id"]                     & B                           
\end{tikzcd}$$ is homotopy commutative, where $f_n$ is the induced map of homotopy fibers. Note that if $\pi_n F = 0$, then $E_n \xrightarrow{e_n} E_{n-1}$ is a homotopy equivalence. 
\end{itemize}

We can complete diagram \eqref{square} to a three--by--three diagram with the rows and columns being fibrations, see e.g. \cite[Section 3.2]{N10}, and from the long exact sequences in homotopy groups we see that the induced map from the homotopy fiber of $g_n$ (as in \Cref{ordinarycase}) to the homotopy fiber $F_n'$ of $h_n$ is a homotopy equivalence, justifying the notation $F_n'$ for the homotopy fiber of $h_n$. 

$$\begin{tikzcd}
F_n' \arrow[r] \arrow[d, "\sim"] & F \arrow[d] \arrow[r, "g_n"]      & F_n \arrow[d]        \\
F_n' \arrow[r] \arrow[d]                   & E \arrow[d, "p"] \arrow[r, "h_n"] & E_n \arrow[d, "p_n"] \\
                                \ast \arrow[r] & B \arrow[r, "id"]                 & B                   
\end{tikzcd}$$

Recall, the homotopy groups of $F_n'$ are trivial in degrees $\leq n$, while they are isomorphic to those of $F$ in degrees $>n$. In particular, the map $E \xrightarrow{h_n} E_n$ induces an isomorphism on $\pi_{\leq n}$ and a surjection on $\pi_{n+1}$, so $h_n^*$ is an isomorphism on cohomology in degrees $\leq n$ and injective in degree $n+1$. 

The data of the fibrations $F_n \to E_n \xrightarrow{p_n} B$, $F_n' \to E \xrightarrow{h_n} E_n$, and $K(\pi_n F, n) \to E_n \xrightarrow{e_n} E_{n-1}$ satisfying the above conditions is called a \emph{Moore--Postnikov system} for $F \to E \xrightarrow{p} B$. The principal $K(\pi_n F, n)$ fibration $E_n \xrightarrow{e_n} E_{n-1}$ is classified by a map $E_{n-1} \to K(\pi_n F, n+1)$, with the corresponding cohomology class $\kappa$ of $E_{n-1}$ referred to as the $k$-invariant in degree $n+1$ of the Moore--Postnikov system. The $k$-invariant pulls back to that of the Postnikov system of $F$, i.e. the diagram 

$$\begin{tikzcd}
F_n \arrow[r] \arrow[d, "f_n"] & E_n \arrow[d, "e_n"] \\
F_{n-1} \arrow[d] \arrow[r]    & E_{n-1} \arrow[d]    \\
{K(\pi_n F, n+1)} \arrow[r]    & {K(\pi_n F, n+1)}   
\end{tikzcd}$$ is homotopy commutative.

Since $E_n$ is the homotopy fiber of the map $E_{n-1} \to K(\pi_n F, n+1)$, we have $e_n^*(\kappa) = 0$. In fact, from the Serre long exact sequence we have that $\kappa$ generates the kernel of $H^{n+1}(E_{n-1}; \pi_n F) \xrightarrow{e_n^*} H^{n+1}(E_n; \pi_n F).$

\subsection{Naturality of the Moore--Postnikov system} Naturality of the ordinary Postnikov system (i.e. the Moore--Postnikov system of the map to a point) is discussed in detail in \cite{Ka63}. For an explicit treatment of naturality of the relative case, we point the reader to \cite[(5.3.5)]{Ba}; see also \cite[Section 3]{He60}.

Suppose we have a fibration $F^1 \to E^1 \xrightarrow{p_1} B^1$ mapping to another fibration $F^2 \to E^2 \xrightarrow{p_2} B^2$, i.e. a homotopy commutative diagram 

$$\begin{tikzcd}
F^1 \arrow[d] \arrow[r] & F^2 \arrow[d] \\
E^1 \arrow[r] \arrow[d, "p_1"] & E^2 \arrow[d, "p_2"] \\
B^1 \arrow[r]           & B^2          
\end{tikzcd}$$

Then, we can find Moore--Postnikov systems $\{E^1_n\}$ for $E^1 \to B^1$ and $\{E^2_n\}$ for $E^2 \to B^2$ and a homotopy commutative diagram 

\adjustbox{scale=0.8,center}{\begin{tikzcd}
F^1 \arrow[rd] \arrow[rrr] \arrow[d]     &                                          &                                & F^2 \arrow[rd] \arrow[d]     &                              &                 \\
\vdots \arrow[ddd]                       & E^1 \arrow[d] \arrow[rrr]                &                                & \vdots \arrow[ddd]           & E^2 \arrow[d]                &                 \\
                                         & \vdots \arrow[ddd]            &                                &                              & \vdots \arrow[ddd]           &                 \\
                                         &                                          &                                &                              &                              &                 \\
F_3^1 \arrow[rd] \arrow[rrr] \arrow[ddd] &                                          &                                & F_3^2 \arrow[rd] \arrow[ddd] &                              &                 \\
                                         & E_3^1 \arrow[rrr] \arrow[ddd] \arrow[rd] &                                &                              & E_3^2 \arrow[ddd] \arrow[rd] &                 \\
                                         &                                          & {K(\pi_4F^1,5)} \arrow[rrr]    &                              &                              & {K(\pi_4F^2,5)} \\
F_2^1 \arrow[rd] \arrow[rrr]             &                                          &                                & F_2^2 \arrow[rd]             &                              &                 \\
                                         & E_2^1 \arrow[ddd] \arrow[rrr] \arrow[rd] &                                &                              & E_2^2 \arrow[ddd] \arrow[rd] &                 \\
                                         &                                          & {K(\pi_3 F^1, 4) } \arrow[rrr] &                              &                              & {K(\pi_3F^2,4)} \\
                                         &                                          &                                &                              &                              &                 \\
                                         & B^1 \arrow[rrr] \arrow[rd]               &                                &                              & B^2 \arrow[rd]               &                 \\
                                         &                                          & {K(\pi_2F^1, 3)} \arrow[rrr]   &                              &                              & {K(\pi_2F^2,3)}
\end{tikzcd}}

\vspace{1em}

where the columns in the back are the induced Postnikov systems for $F^1$ and $F^2$.

\section{More preliminaries, and a proof of Ehresmann's theorem}

Let $X$ be a CW complex, and let $\xi$ be an oriented real rank $2n$ vector bundle over $X$, whose structure group we have reduced to $SO(2n)$ without loss of generality. Let $X \xrightarrow{f} BSO(2n)$ be the homotopy class of maps classifying this bundle up to isomorphism. The problem of determining whether $\xi$ admits an almost complex structure is equivalent to the problem of lifting $f$ through the natural map $BU(n) \to BSO(2n)$.

$$\begin{tikzcd}
                                         &  & BU(n) \arrow[d] \\
X \arrow[rr, "f"] \arrow[rru, dashed] &  & BSO(2n)        
\end{tikzcd}$$

This is an obstruction-theoretic problem, which we address by decomposing $BU(n) \to BSO(2n)$ into a Moore--Postnikov system and determining the obstructions to lifting stage-by-stage.

\subsection{Some relevant homotopy groups} We list the relevant homotopy groups of $SO(2n)/U(n)$ for our lifting problem here. First of all, since $U(n+1)$ acts transitively on $S^{2n+1}$ with isotropy $U(n)$, we have the fiber bundle $U(n) \to U(n+1) \to S^{2n+1}$, giving us a fibration $S^{2n+1} \to BU(n) \to BU(n+1)$. From here one sees that the natural map $BU(n) \to BU(n+1)$ induces an isomorphism on $\pi_{\leq 2n}$. Hence the homotopy groups $\pi_{\leq 2n} BU(n) $ are stable, and in particular isomorphic, via the map induced by the inclusion, to $\pi_{\leq 2n} BU$. The homotopy groups of $BU$ are known by Bott's periodicity theorem: concretely, $\pi_{2k}BU \cong \mathbb{Z}$ and $\pi_{2k+1}BU = 0$ for all $k$. Similarly, one obtains that the groups $\pi_{\leq 2n-1}BSO(2n)$ are stable. The homotopy groups of the stable space $BO$ are eightfold-periodic: starting from $\pi_1 BO$ they read $\mathbb{Z}_2, \mathbb{Z}_2, 0, \mathbb{Z}, 0, 0, 0, \mathbb{Z}$. The homotopy groups of $BSO$ differ only in $\pi_1$, where we have $\pi_1 BSO = 0$. 

From the map of fibrations $$\begin{tikzcd} SO(2n)/U(n) \arrow[r] \arrow[d] & SO/U \arrow[d] \\  BU(n) \arrow[r] \arrow[d] & BU \arrow[d] \\ BSO(2n) \arrow[r] & BSO \end{tikzcd} $$ and the induced map of long exact sequences in homotopy groups, we see that the groups $\pi_{\leq 2n-2} SO(2n)/U(n)$ are stable using the five lemma, i.e. they are isomorphic via the natural map to those of $SO/U$. A part of Bott's periodicity theorem also gives us that $SO/U$ has the homotopy type of a connected component of the based loop space $\Omega O$. Hence its first several homotopy groups, starting with $\pi_1$, are given by $$0, \mathbb{Z}, 0, 0, 0, \mathbb{Z}, \mathbb{Z}_2, \mathbb{Z}_2, 0, \mathbb{Z}, 0, 0, 0, \mathbb{Z}, \ldots $$ In particular, we have $$\pi_{4k+2} SO/U \cong \mathbb{Z}.$$

The first unstable homotopy group of $SO(2n)/U(n)$, namely $\pi_{2n-1} SO(2n)/U(n)$, depends on the mod 4 class of $n$, and is given by 
\begin{equation}\label{harris}\pi_{2n-1}(SO(2n)/U(n)) =
\begin{cases}
\mathbb{Z} \oplus \mathbb{Z}_2 & n\equiv 0 \bmod 4,\\
\mathbb{Z}_{(n-1)!} & n \equiv 1 \bmod 4,\\
\mathbb{Z} & n \equiv 2 \bmod 4,\\
\mathbb{Z}_{\tfrac{1}{2}(n-1)!} & n \equiv 3 \bmod 4.
\end{cases}
\end{equation}

This is summarized in \cite[Lemma 1]{M61}, and the reference to (then forthcoming) work of Bruno Harris therein is to \cite{Ha63}.

\subsection{The first obstruction to an almost complex structure} Since $SO(2)/U(1)$ is a point, there are no obstructions to reducing the structure group. From now on we suppose $n\geq 2$. Then $\pi_2 SO(2n)/U(n)$ is stable, and hence isomorphic to $\mathbb{Z}$. Therefore, the first $k$-invariant in the Moore--Postnikov system for $BU(n) \to BSO(2n)$ is a cohomology class in $H^3(BSO(2n); \mathbb{Z})$.

Since $BSO(2n)$ is simply connected, by the Hurewicz theorem we have that $$H_2(BSO(2n);\mathbb{Z}) \cong \pi_2(BSO(2n); \mathbb{Z}) \cong \mathbb{Z}_2,$$ and that $\pi_3(BSO(2n)) = 0$ surjects onto $H_3(BSO(2n);\mathbb{Z})$, i.e. the latter group is trivial. By the universal coefficient theorem, we have $H^3(BSO(2n);\mathbb{Z}) \cong \mathbb{Z}_2$. The non-zero element in this group is the third integral Stiefel--Whitney class $W_3 = \beta w_2$. Here and throughout, $\beta$ denotes the Bockstein morphism associated to the short exact sequence $0 \to \ZZ \to \ZZ \to \ZZ_2 \to 0$.

%By the universal coefficient theorem, this group is isomorphic to the direct sum of the free part of $H_3(BSO(2n); \mathbb{Z})$ with the torsion part of $H_2(BSO(2n); \mathbb{Z})$. Here we have used that $BSO(2n)$ has degree-wise finite-dimensional homology, as it admits a cellular decomposition with finitely many cells in each degree. 

%The free part of $H_3(BSO(2n); \mathbb{Z})$ injects into the rational homology $H_3(BSO(2n); \mathbb{Q})$, which is in turn isomorphic to $H^3(BSO(2n); \mathbb{Q})$ by the universal coefficient theorem. Now, the rational cohomology algebra $H^*(BSO(2n); \mathbb{Q})$ is a polynomial algebra on Pontryagin classes and an Euler class, $\mathbb{Q}[p_1, p_2, \ldots, p_{n-1}, e]$, where $\deg(p_i) = 4i$, $\deg(e) = 2n$, and $e^2 = p_n$. In particular, $H^3(BSO(2n);\mathbb{Q}) = 0$, and so the free part of $H_3(BSO(2n);\mathbb{Z})$ is trivial. 

%As for the torsion part of $H_2(BSO(2n); \mathbb{Z})$, we note that, since $BSO(2n)$ is simply connected, by the Hurewicz theorem we have $H_2(BSO(2n);\mathbb{Z}) \cong \pi_2 BSO(2n) \cong \mathbb{Z}_2$. Hence $$H^3(BSO(2n);\mathbb{Z}) \cong \mathbb{Z}_2.$$

Denote by $E_2$ the second stage of the Moore--Postnikov system for $BU(n) \to BSO(2n)$, as in \Cref{moorepostnikov}. Since $BU(n) \to E_2$ has two--connected fiber, it induces an injection on $H^3(-;\mathbb{Z})$. As $H^3(BU(n);\mathbb{Z}) = 0$, we have $H^3(E_2;\ZZ) = 0$. The $k$-invariant for the fibration $E_2 \to BSO(2n)$ generates the kernel of the induced map on $H^3(-;\ZZ)$, and therefore it is the class $W_3$. We note that $W_3$ of an orientable bundle vanishes precisely when $w_2$ has an integral lift, i.e. when $w_2$ is the mod 2 reduction of an integral cohomology class, which would be the first Chern class of the almost complex structure we would obtain if we could lift to $BU(n)$. The homotopy fiber of $BSO(2n) \xrightarrow{W_3} K(\mathbb{Z}, 3)$ can be identified with $BSpin^c(2n)$, the classifying space of the Lie group $Spin^c(2n)$, though we will not make use of this here.

Note that $W_3$ of the tangent bundle of an  orientable four-manifold vanishes, see \cite{TV}. This is not true of higher dimensional manifolds: for example, the five-dimensional Wu manifold $M := SU(3)/SO(3)$ has non-zero $W_3$. In particular, the six-manifold $S^1\times M$ does not admit an almost complex structure. 

\subsection{The second obstruction to an almost complex structure}

Supposing $W_3$ of the bundle is $0$, we now have to distinguish three cases: $n=2$, $n=3$, and $n\geq 4$. If $n=2$, then the second (and, in the case of $X$ having dimension $\leq 4$, last) obstruction to lifting to $BU(2)$ lies in $H^4(X; \pi_3 SO(4)/U(2)) \cong H^4(X; \mathbb{Z})$. Since $W_3 = 0$ we can choose a lift $X \to E_2$ of the map $\tau$, which yields an integral lift $c$ of $w_2$ on $X$. The second obstruction is then obtained by pulling back the $k$-invariant $E_2 \to K(\mathbb{Z}, 4)$ to $X$ via this lift (see e.g. \cite[Chapter IX.7]{Wh}). Different choices of lift will generally give different classes in $H^4(X; \mathbb{Z})$. A theorem of Wu \cite[Théorème IV.10]{W52} tells us that, on a closed orientable four--manifold, this obstruction class vanishes if and only if $c^2 - 2e - p_1$ vanishes. More generally, for an $SO(4)$ bundle over a CW complex, $c^2 - 2e - p_1$ is four times this second obstruction; see \Cref{warmupThmII}.

For $n=3$, we have $SO(6)/U(3) = \mathbb{CP}^3$, so the second obstruction lies in $H^8(X;\pi_7 \mathbb{CP}^3) \cong H^8(X;\ZZ)$, and we will address this case later, in \Cref{secondobstruction2n=6}. In particular, if $X$ is a six--dimensional CW complex, this and all further obstructions vanish, leaving $W_3$ as the sole obstruction, recovering an observation of Ehresmann \cite[section 6]{E52}. Let us now focus on the case $n \geq 4$, where the second obstruction to lifting to $BU(n)$ is stable, and lies in $H^7(X;\pi_6 SO(2n)/U(n)) \cong H^7(X; \mathbb{Z})$. By the obstruction being stable, we mean that it is obtained by pulling back the corresponding obstruction for lifting the stabilized real bundle $X \to BSO$ through $BU \to BSO$, where one has the commutative diagram $$\begin{tikzcd}
SO(2n)/U(n) \arrow[d] \arrow[r] & SO/U \arrow[d] \\
BU(n) \arrow[d] \arrow[r]       & BU \arrow[d]   \\
BSO(2n) \arrow[r]               & BSO           
\end{tikzcd}$$ We will not make use of this observation so we will not go into the details.

As with $W_3$, we see that since $W_7$ pulls back to the zero class in $BU(n)$, we must have $W_7 = 0$ for a complex vector bundle (since $c_3$ will be an integral lift of $w_6$). However, it may be the case that when pulling back $W_7$ to $E_2$, the class becomes divisible by an integer $\geq 2$, and the relevant $k$-invariant is in fact a ``fractional part'' of $W_7$. Even though this does not happen in the case of $W_7$ as we will now see, later on we will observe that this phenomenon does occur in higher degrees, for $W_{11}, W_{15}, \ldots$ One might also find it curious that $W_5$ has been skipped; as soon as $W_3 = 0$, we have that $W_5 = 0$. Let us record a proof of this phenomenon here; the method will also become relevant again in \Cref{TheoremIIsection}.

\begin{proposition}\label{integralSW} Let $\xi \to X$ be an orientable real vector bundle such that $W_{4k-1} = 0$ for $k\leq m$. Then $W_{4m+1} = 0$. \end{proposition}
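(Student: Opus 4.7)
The plan is to combine Wu's formula with a Serre spectral sequence argument along the Moore--Postnikov tower built in \Cref{moorepostnikov}.

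First I would use Wu's formula on orientable bundles to deduce 2-divisibility of $W_{4m+1}(\xi)$. Since $w_1(\xi) = 0$, the formula $Sq^1 w_j = w_1 w_j + (j-1) w_{j+1}$ applied at $j = 4m-2$ (so the coefficient $j-1$ is odd) yields $w_{4m-1} = Sq^1 w_{4m-2}$, and factoring $Sq^1 = \rho \circ \beta$ through the integer Bockstein gives $w_{4m-1}(\xi) = \rho W_{4m-1}(\xi)$. Hence the hypothesis $W_{4m-1}(\xi) = 0$ forces $w_{4m-1}(\xi) = 0$. Next, the formula $Sq^2 w_{4m-1} = w_2 w_{4m-1} + w_1 w_{4m} + w_{4m+1}$ on an orientable bundle rearranges to $w_{4m+1}(\xi) = Sq^2 w_{4m-1}(\xi) + w_2(\xi) w_{4m-1}(\xi)$, and both terms on the right vanish by the preceding step. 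Applying Wu once more at $j = 4m$ shows $\rho W_{4m+1}(\xi) = w_{4m+1}(\xi) = 0$, so $W_{4m+1}(\xi) \in 2 \cdot H^{4m+1}(X;\mathbb{Z})$. Because $W_{4m+1}(\xi) = \beta w_{4m}(\xi)$ lies in the image of the integer Bockstein, it is also annihilated by $2$. Thus $W_{4m+1}(\xi)$ is simultaneously 2-divisible and of order dividing $2$.

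Next, I would upgrade this to vanishing by naturality. The full set of hypotheses allows us to factor the classifying map $\xi : X \to BSO(2n)$ through the Moore--Postnikov stage $F$ obtained by successively killing the integer $k$-invariants $W_3, W_7, \ldots, W_{4m-1}$ (using only the integer stages of the tower). On $F$ the same Wu argument applies universally, so $W_{4m+1}|_F \in H^{4m+1}(F;\mathbb{Z})$ is also 2-divisible and 2-torsion. A Serre spectral sequence computation along the tower $F \to BSO(2n)$, whose relevant fibers are Eilenberg--MacLane spaces $K(\mathbb{Z}, 4k-2)$ for $k \leq m$, then shows that the universal class $W_{4m+1}|_F$ is killed outright. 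The key input is to identify a differential in the spectral sequence that sends an appropriate product of fiber fundamental classes to $W_{4m+1}$ in the base; for the basic case $m = 1$ (where $F = BSpin^c(2n)$) this is the integer transgression $d_5(u^2) = W_5$ in the $K(\mathbb{Z},2)$-fibration, and the higher $m$ cases admit analogous identifications via Kudo's transgression theorem combined with the Wu identities above.

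The main obstacle will be the spectral sequence step. Although the inputs (the classical structure of $H^*(BSO(2n);\mathbb{Z})$ and of $H^*(K(\mathbb{Z},n);\mathbb{Z})$, together with the Wu formulas) are each available at the time of \cite{M61}, one must carefully track how transgressions of products of fiber classes map into the universal torsion classes $W_{4m+1}$ to conclude that $W_{4m+1}|_F$ is actually zero rather than merely 2-divisible. Once this is done, naturality of $\xi^*$ pulls the vanishing back to $H^{4m+1}(X;\mathbb{Z})$.
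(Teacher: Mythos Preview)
Your first step is correct: from $W_{4m-1}(\xi)=0$ and orientability, the Wu formulas give $w_{4m-1}(\xi)=0$ and then $w_{4m+1}(\xi)=0$, so $W_{4m+1}(\xi)$ is both $2$-divisible and $2$-torsion. As you recognize, this alone does not force vanishing, since the target group may contain $\mathbb{Z}/4$ summands.

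The genuine gap is in your second step. Passing to the tower $F$ obtained by killing $W_3,W_7,\dots,W_{4m-1}$ over $BSO(2n)$ is fine, and the first-step argument applies universally there. But to conclude you must show $W_{4m+1}|_F=0$ as an \emph{integral} class, and your proposed mechanism---a transgression hitting $W_{4m+1}$ in the Serre spectral sequence of the last fibration $K(\mathbb{Z},4m-2)\to F\to F_{m-1}$---is not substantiated beyond $m=1$. Kudo's theorem is a mod $2$ statement: it tells you $Sq^2(\rho_2 v)$ transgresses to $Sq^2 w_{4m-1}\equiv w_{4m+1}$, which recovers only $w_{4m+1}|_F=0$, i.e.\ exactly what step one already gave. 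Promoting this to an integral transgression onto $W_{4m+1}$ requires control over $H^{4m}(K(\mathbb{Z},4m-2);\mathbb{Z})$ and its image under $d_{4m+1}$ (equivalently, that $H^{4m+1}(F;\mathbb{Z})$ has no $4$-torsion), and you have not indicated how to obtain this for $m\geq 2$. The $m=1$ case works for a special reason: the fiber is $K(\mathbb{Z},2)$ with polynomial integral cohomology, so $u^2$ is an honest integral class whose transgression one can identify.

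The paper's proof is entirely different and avoids all of this machinery. It uses Wu's formula for the Pontryagin square,
\[
\mathfrak{P}(w_{2m})=\rho_4(p_m)+\theta_2\Bigl(\sum_{j<m}w_{2j}w_{4m-2j}\Bigr),
\]
together with the hypothesis (which furnishes integral lifts $c_j$ of $w_{2j}$ for $j<2m$) and the identity $\mathfrak{P}(\rho_2 c_m)=\rho_4(c_m^2)$, to exhibit directly an integral lift of $w_{4m}$. Then $W_{4m+1}=\beta w_{4m}=0$ immediately. This is a few lines of diagram chasing with the sequences $\mathbb{Z}\xrightarrow{2}\mathbb{Z}\to\mathbb{Z}_2$ and $\mathbb{Z}_2\to\mathbb{Z}_4\to\mathbb{Z}_2$, uses the full hypothesis in an essential way, and needs no Moore--Postnikov towers or spectral sequences at all.
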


We will use the following formula \cite[Theorem C]{T60} attributed to Wu\footnote{In the notation of loc. cit., $W_i$ denotes the mod 2 Stiefel--Whitney class $w_i$.}, involving the Pontryagin square operation $H^{2m}(X;\mathbb{Z}_2) \xrightarrow{\mathfrak{P}} H^{4m}(X;\mathbb{Z}_4)$, which for orientable bundles (i.e. $w_1=0$) becomes

\begin{equation}\label{psquare} \mathfrak{P}(w_{2m}) = \rho_4(p_m) + \theta_2\left( \sum_{j=0}^{m-1}w_{2j}w_{4m-2j} \right).\end{equation}

Here, $p_m$ denotes the Pontryagin class, $\rho_4$ denotes mod 4 reduction, and $\theta_2$ denotes the map on cohomology $H^{4m}(X;\mathbb{Z}_2) \to H^{4m}(X;\mathbb{Z}_4)$ induced by the inclusion $\mathbb{Z}_2 \hookrightarrow \mathbb{Z}_4$. Before proceeding with the proof of \Cref{integralSW}, we record the following:

\begin{lemma}\label{pontryaginsquarelemma} Let $\tilde{u}$ be an integral lift of a mod 2 cohomology class $u$. Then $\mathfrak{P}(u) = \rho_4(\tilde{u}^2)$. \end{lemma}

\begin{proof} By construction, the Pontryagin square satisfies $$\mathfrak{P}(\rho_2^4(x)) = x^2$$ for any mod 4 class $x$, where $\rho_2^4$ denotes the mod 2 map $H^*(-;\ZZ_4) \to H^*(-;\ZZ_2)$; see e.g. \cite[(1.3)]{BT62}. Note that $\rho_2^4(\rho_4(\tilde{u})) = \rho_2(\tilde{u}) = u$.  Therefore \[\mathfrak{P}(u) = \mathfrak{P}(\rho_2^4(\rho_4(\tilde{u}))) = \rho_4(\tilde{u})^2 = \rho_4(\tilde{u}^2).\hfill \qedhere\] \end{proof}

\begin{proof}[Proof of \Cref{integralSW}]

Since the classes $W_{\leq 4m-1}$ vanish, the classes $w_{\leq 4m-2}$ are the mod 2 reductions of integral classes. Let us denote by $c_k$ some fixed integral lift of $w_{2k}$, for $k\leq 2m-1$. Since $c_m$ is an integral lift of $w_{2m}$, \Cref{pontryaginsquarelemma} gives us $$\mathfrak{P}(w_{2m}) = \rho_4(c_m^2).$$

Now, consider the following commutative diagram of short exact sequences of abelian groups

$$\begin{tikzcd}
0 \arrow[r] \arrow[d] & \ZZ \arrow[d, "\mathrm{mod }\,  2"] \arrow[r] & \ZZ \arrow[d, "\mathrm{mod }\,  4"] \arrow[r] & \ZZ_2 \arrow[d, "="] \arrow[r] & 0 \arrow[d] \\
0 \arrow[r]           & \ZZ_2 \arrow[r]         & \ZZ_4 \arrow[r]         & \ZZ_2 \arrow[r]           & 0          
\end{tikzcd}$$

and the following commutative diagram coming from the induced map of long exact sequences in cohomology $$\begin{tikzcd}
H^{4m-1}(X;\ZZ_2) \arrow[r, "\beta"] \arrow[d, "="] & H^{4m}(X;\ZZ) \arrow[d, "\rho_2"] \arrow[r, "\times 2"] & H^{4m}(X;\ZZ) \arrow[d, "\rho_4"] \arrow[r, "\rho_2"] & H^{4m}(X;\ZZ_2) \arrow[d, "="]  \\
H^{4m-1}(X;\ZZ_2) \arrow[r, "Sq^1"]   & H^{4m}(X;\ZZ_2) \arrow[r, "\theta_2"]       & H^{4m}(X;\ZZ_4) \arrow[r, "\rho_2^4"]              & H^{4m}(X;\ZZ_2)               
\end{tikzcd}$$

In particular, from the commutativity of the middle square, for any class $x$ we have \begin{equation}\label{2x-identity} \theta_2(\rho_2(x)) = \rho_4(2x).\end{equation} Now, since $\sum_{j=1}^{m-1} w_{2j}w_{4m-2j} = \rho_2\left( \sum_{j=1}^{m-1} c_j c_{2m-j} \right)$, we conclude that 
$$\theta_2 \left(\sum_{j=1}^{m-1} w_{2j}w_{4m-2j}\right) = \rho_4\left( 2\sum_{j=1}^{m-1} c_j c_{2m-j}\right).$$

Recall $\mathfrak{P}(w_{2m}) = \rho_4(c_m^2)$. Since $\theta_2(w_{4m}) = \mathfrak{P}(w_{2m}) - \rho_4(p_m) - \theta_2\left( \sum_{j=1}^{m-1} w_{2j}w_{4m-2j}\right),$ we therefore have $$\theta_2(w_{4m}) = \rho_4\left( c_m^2 - p_m - 2\sum_{j=1}^{m-1} c_j c_{2m-j} \right).$$ Since $\rho_2^4(\theta_2(w_{4m})) = 0$ by exactness, we have $$\rho_2\left( c_m^2 - p_m - 2\sum_{j=1}^{m-1} c_j c_{2m-j}\right) = 0$$ using the right-most square of the diagram. Therefore, there is a class $x \in H^{4m}(X;\ZZ)$ such that $$2x = c_m^2 - p_m - 2\sum_{j=1}^{m-1} c_j c_{2m-j}.$$ Then $\theta_2(\rho_2(x)) = \rho_4(2x) = \theta_2(w_{4m})$. Therefore, $\rho_2(x) + w_{4m} \in \ker(\theta_2) = \mathrm{im}(Sq^1)$. Hence, there is some $y \in H^{4m-1}(X;\ZZ_2)$ such that $Sq^1(y) = \rho_2(x) + w_{4m}$. Since $Sq^1 = \rho_2 \circ \beta$, we have $$w_{4m} = \rho_2(x + \beta(y)).$$ That is, $x+\beta(y)$ is an integral lift of $w_{4m}$, and hence $W_{4m+1} = 0$. \end{proof}

We return to the second obstruction to lifting through $BU(n) \to BSO(2n)$, for $n\geq 4$. We will identify it as $W_7$ in another way in \Cref{theoremIsection}, as part of proving the more general \Cref{TheoremI}. However, here we address it directly, as Massey points out \cite[Remark 1]{M61}, this obstruction was known earlier, yet stated without proof, by Ehresmann \cite{E50}.

Denote by $p$ the map $E_2 \to BSO(2n)$. To show that the $k$-invariant $E_2 \to K(\mathbb{Z}, 7)$ is $p^*W_7$, we calculate the relevant cohomology of $E_2$ from the Serre spectral sequence associated to the fibration $K(\mathbb{Z}, 2) \to E_2 \to BSO(2n)$. For the integral cohomology of $BSO(2n)$, we point the reader to \cite[Theorem 1.6]{B82} for a detailed treatment. Below we have the relevant terms of the second page of the spectral sequence for $n\geq 5$. The only difference in the case of $n=4$ is that the Euler class appears in degree eight. However, it does not affect our computation, and so our conclusion will hold in this case as well.

\adjustbox{scale=0.69,center}{
\begin{tikzcd}
{}                                                                                    &                                                  &   &   &                                                      &                                                   &                                                     &                                   &                                                           &                                    &    \\
6                                                                                     & \mathbb{Z}\langle \alpha^3 \rangle \arrow[rrrdd] &   &   &                                                      &                                                   &                                                     &                                   &                                                           &                                    &    \\
5                                                                                     &                                                  &   &   &                                                      &                                                   &                                                     &                                   &                                                           &                                    &    \\
4                                                                                     & \large \mathbb{Z}\langle \alpha^2 \rangle        &   &   & \mathbb{Z}_2\langle W_3\alpha^2 \rangle              & \mathbb{Z}\langle p_1\alpha^2 \rangle             &                                                     &                                   &                                                           &                                    &    \\
3                                                                                     &                                                  &   &   &                                                      &                                                   &                                                     &                                   &                                                           &                                    &    \\
2                                                                                     & \mathbb{Z}\langle \alpha \rangle \arrow[rrrdd]   &   &   & \mathbb{Z}_2\langle W_3 \alpha \rangle \arrow[rrrdd] & \mathbb{Z}\langle p_1\alpha \rangle \arrow[rrrdd] & \mathbb{Z}_2\langle W_5\alpha \rangle \arrow[rrrdd] &                                   &                                                           &                                    &    \\
1                                                                                     &                                                  &   &   &                                                      &                                                   &                                                     &                                   &                                                           &                                    &    \\
0                                                                                     & \mathbb{Z}\langle 1 \rangle                      &   &   & \mathbb{Z}_2\langle W_3 \rangle                      & \mathbb{Z}\langle p_1 \rangle                     & \mathbb{Z}_2\langle W_5 \rangle                     & \mathbb{Z}_2\langle W_3^2 \rangle & {\mathbb{Z}_2\langle W_3 p_1, W_7, \beta(w_2w_4) \rangle} & \mathbb{Z}_2\langle W_3W_5 \rangle \oplus \mathbb{Z}\langle p_1^2, p_2 \rangle &    \\
{} \arrow[uuuuuuuu, no head, shift right=8] \arrow[rrrrrrrrrr, no head, shift left=8] & 0                                                & 1 & 2 & 3                                                    & 4                                                 & 5                                                   & 6                                 & 7                                                         & 8                                  & {}
\end{tikzcd}
}

\vspace{1em}

Note that, since the cohomology of the fiber is concentrated in even degrees, only the odd-indexed pages of the spectral sequence can have non-trivial differentials. On the third page, since $p^*W_3 = 0$, we must have $d_3(\alpha) = W_3$. Note that then $d_3(\alpha^2) = 2\alpha W_3 = 0$, so $\alpha^2$ together with $2\alpha$ survives to the fourth, and hence fifth, page. We have $d_3(\alpha^3) = 3W_3\alpha^2 = W_3\alpha^2, \,\,\,\, d_3(W_3 \alpha) = W_3^2$, and $d_3(W_5\alpha) = W_3W_5$. Moving on to the fifth page of the spectral sequence, we have $d_5(\alpha^2) = W_5$:

\adjustbox{scale=0.7,center}{
\begin{tikzcd}
{}                                                                                   &                                                      &   &   &                                 &                                       &                                 &   &                                                  &    \\
6                                                                                    & \mathbb{Z}\langle 2\alpha^3 \rangle                  &   &   &                                 &                                       &                                 &   &                                                  &    \\
5                                                                                    &                                                      &   &   &                                 &                                       &                                 &   &                                                  &    \\
4                                                                                    & \mathbb{Z}\langle \alpha^2 \rangle \arrow[rrrrrdddd] &   &   &                                 & \mathbb{Z}\langle p_1\alpha^2 \rangle &                                 &   &                                                  &    \\
3                                                                                    &                                                      &   &   &                                 &                                       &                                 &   &                                                  &    \\
2                                                                                    & \mathbb{Z}\langle 2\alpha \rangle                    &   &   &                                 & \mathbb{Z}\langle 2 p_1\alpha \rangle &                                 &   &                                                  &    \\
1                                                                                    &                                                      &   &   &                                 &                                       &                                 &   &                                                  &    \\
0                                                                                    & \mathbb{Z}\langle 1 \rangle                          &   &   &  & \mathbb{Z}\langle p_1 \rangle         & \mathbb{Z}_2\langle W_5 \rangle &   & {\mathbb{Z}_2\langle W_7, \beta(w_2w_4) \rangle} &    \\
{} \arrow[uuuuuuuu, no head, shift right=8] \arrow[rrrrrrrrr, no head, shift left=8] & 0                                                    & 1 & 2 & 3                               & 4                                     & 5                               & 6 & 7                                                & {}
\end{tikzcd}
}

\vspace{1em}

Indeed, first of all, since the third, fourth, and fifth homotopy groups of $SO(2n)/U(n)$ are trivial, we have that $E_5 \xrightarrow{e_5} E_4 \xrightarrow{e_4} E_3 \xrightarrow{e_3} E_2$ is a composition of homotopy equivalences. Therefore, $BU(n) \to E_2 \simeq E_5$ is an isomorphism on $H^{\leq 5}(-;\mathbb{Z})$. Since $H^5(BU(n);\mathbb{Z}) = 0$, we have $p^*W_5 = 0$ and hence it must be at this page that $W_5$ is killed, i.e. $d_5(\alpha^2) = p^*W_5$. On the seventh page, we have $d_7(2\alpha^3) = p^*\beta(w_2w_4)$.

%\vspace{2em}

\adjustbox{scale=0.7, center}{
\begin{tikzcd}
{}                                                                                   &                                                           &   &   &                                 &                                       &                                 &   &                                                  &    \\
6                                                                                    & \mathbb{Z}\langle 2\alpha^3 \rangle \arrow[rrrrrrrdddddd] &   &   &                                 &                                       &                                 &   &                                                  &    \\
5                                                                                    &                                                           &   &   &                                 &                                       &                                 &   &                                                  &    \\
4                                                                                    & \mathbb{Z}\langle 2\alpha^2 \rangle                       &   &   &                                 & \mathbb{Z}\langle p_1\alpha^2 \rangle &                                 &   &                                                  &    \\
3                                                                                    &                                                           &   &   &                                 &                                       &                                 &   &                                                  &    \\
2                                                                                    & \mathbb{Z}\langle 2\alpha \rangle                         &   &   &                                 & \mathbb{Z}\langle 2 p_1\alpha \rangle &                                 &   &                                                  &    \\
1                                                                                    &                                                           &   &   &                                 &                                       &                                 &   &                                                  &    \\
0                                                                                    & \mathbb{Z}\langle 1 \rangle                               &   &   &  & \mathbb{Z}\langle p_1 \rangle         &  &   & {\mathbb{Z}_2\langle W_7, \beta(w_2w_4) \rangle} &    \\
{} \arrow[uuuuuuuu, no head, shift right=8] \arrow[rrrrrrrrr, no head, shift left=8] & 0                                                         & 1 & 2 & 3                               & 4                                     & 5                               & 6 & 7                                                & {}
\end{tikzcd}
}

\vspace{2em}

Indeed, since $p^*W_3 = p^*W_5 = 0$, the classes $p^*w_2$ and $p^*w_4$ have integral lifts in $H^*(E_2;\mathbb{Z})$. Hence $p^*(w_2w_4)$ has an integral lift as well, so $p^*\beta(w_2w_4) = \beta(p^*(w_2w_4)) = 0$. The only possible differential that could kill $\beta(w_2w_4)$ is the one above on the seventh page.

We conclude that $H^7(E_2;\mathbb{Z}) = \{0, p^*W_7\}$. Since $BU(n) \to E_6$ induces an injection on $H^7(-;\mathbb{Z})$, it must be that $p^*W_7$ pulls back to be zero in $E_6$, and $W_7$ vanishes when pulled back to $BU(n)$. We conclude that the $k$-invariant $E_2 \to K(\mathbb{Z}, 7)$ is $p^*W_7$:

\begin{thm}(Ehresmann, \cite[Section 6]{E50}) The second obstruction to reducing the structure group of an $SO(2n)$ bundle to $U(n)$, where $n\geq 4$, is $W_7$. \end{thm}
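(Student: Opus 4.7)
The plan follows the Moore--Postnikov framework from Section 2 applied to $BU(n) \to BSO(2n)$. Since $\pi_i(SO(2n)/U(n)) = 0$ for $i = 3, 4, 5$, the stages $E_3, E_4, E_5$ are homotopy equivalent to $E_2$, so the next nontrivial $k$-invariant, classifying $e_6: E_6 \to E_5$, lives in $H^7(E_5; \pi_6 SO(2n)/U(n)) \cong H^7(E_2; \mathbb{Z})$. My strategy is to compute this group directly, show its unique nontrivial element is $p^*W_7$, and then argue that the $k$-invariant must equal it.

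First I would run the Serre spectral sequence of the principal fibration $K(\mathbb{Z}, 2) \to E_2 \xrightarrow{p} BSO(2n)$. Starting from Borel's description of the integral cohomology of $BSO(2n)$ through degree $8$ and $H^*(K(\mathbb{Z},2);\mathbb{Z}) = \mathbb{Z}[\alpha]$, I assemble the $E_2$-page. Only odd pages carry differentials since the fiber cohomology is concentrated in even degrees. The defining differential $d_3(\alpha) = W_3$ is forced by $p^*W_3 = 0$; the Leibniz rule then sends $\alpha^3 \mapsto W_3 \alpha^2$ (so only $2\alpha^3$ survives in bidegree $(0,6)$), kills $W_3^2$, $W_3 W_5$, and $W_3 p_1$, and leaves only $2\alpha$ and $2 p_1 \alpha$ in their columns.

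On page 5, the fact that $h_2: BU(n) \to E_2$ is an isomorphism on $H^{\leq 5}(-;\mathbb{Z})$ and that $H^5(BU(n); \mathbb{Z}) = 0$ forces $d_5(\alpha^2) = W_5$, killing $W_5$. On page 7, since $p^*w_2$ and $p^*w_4$ admit integral lifts (their integral Stiefel--Whitney classes vanish after pullback), the class $p^*\beta(w_2 w_4) = \beta(p^*(w_2 w_4))$ vanishes in $E_2$, so it must be hit by a differential; the only possibility is $d_7(2\alpha^3) = \beta(w_2 w_4)$. After these differentials, the only surviving class in total degree 7 is $p^*W_7$, giving $H^7(E_2; \mathbb{Z}) \cong \mathbb{Z}/2$.

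Finally, I would identify this class with the $k$-invariant $\kappa$, which generates $\ker(e_6^*: H^7(E_5; \mathbb{Z}) \to H^7(E_6; \mathbb{Z}))$. The factorization $h_2 = e_3 \circ e_4 \circ e_5 \circ e_6 \circ h_6$ pulls $p^*W_7$ back to $W_7 \in H^7(BU(n); \mathbb{Z})$, which vanishes since $w_6 = \rho_2(c_3)$ has an integral lift. Because $h_6^*$ is injective on $H^7$, this forces $e_6^*(p^*W_7) = 0$; as $H^7(E_2; \mathbb{Z}) \cong \mathbb{Z}/2$, the kernel is all of it, and therefore $\kappa = p^*W_7$. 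The main obstacle is the careful bookkeeping in the spectral sequence, in particular ensuring that no other generator of $H^*(BSO(2n);\mathbb{Z})$ contributes a surviving class in total degree 7 and that each forced differential actually lands on the required target. Once that is checked, identification with the $k$-invariant is purely formal from the Moore--Postnikov machinery in Section 2.
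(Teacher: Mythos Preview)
Your proposal is correct and follows essentially the same approach as the paper: both compute $H^7(E_2;\mathbb{Z})$ via the Serre spectral sequence of $K(\mathbb{Z},2)\to E_2\to BSO(2n)$, force the differentials $d_3(\alpha)=W_3$, $d_5(\alpha^2)=W_5$, and $d_7(2\alpha^3)=\beta(w_2w_4)$ by exactly the arguments you describe, conclude $H^7(E_2;\mathbb{Z})=\{0,p^*W_7\}$, and then identify the $k$-invariant with $p^*W_7$ using injectivity of $h_6^*$ on $H^7$ together with the vanishing of $W_7$ on $BU(n)$. The only cosmetic difference is that you spell out in words the killing of $W_3p_1$ by $d_3(p_1\alpha)$, which the paper records only in its spectral sequence diagram.
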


Note that if $n=3$, then $W_7$ automatically vanishes, as the Euler class is an integral lift of $w_6$.

\section{Proof of Massey's Theorem I}\label{theoremIsection}

We now provide the details for Massey's generalization of the above result on $W_7$:

\begin{thm}\label{TheoremI}\cite[Theorem I]{M61} Let $\theta \to X$ be the $SO(2n)/U(n)$ bundle associated to an oriented rank $2n$ real vector bundle $\xi$ over a CW complex $X$, and let $s$ be a section of the bundle over the $(4k+2)$--skeleton of $X$. If $4k+3 < 2n$, then $$W_{4k+3}(\xi) = \begin{cases}
(2k)!\, \mathfrak{o}_{4k+3} \ \ \ \ &k \, \mathrm{even},\\
\tfrac{1}{2}(2k)!\, \mathfrak{o}_{4k+3} \ \ \ \  &k \, \mathrm{odd},
\end{cases}$$
where $\mathfrak{o}_{4k+3}$ denotes the obstruction\footnote{Note that the obstruction is only well-defined up to sign, and we make a particular choice in the above statement.} to extending $s$ over the $(4k+3)$--skeleton of $X$. \end{thm}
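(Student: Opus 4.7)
The plan is to imitate the argument just given for $W_7$, extending it to higher degrees. First, I identify the obstruction $\mathfrak{o}_{4k+3}$ as the pullback, via the lift $X \to E_{4k+1}$ corresponding to $s$, of the $k$-invariant $\kappa \in H^{4k+3}(E_{4k+1};\mathbb{Z})$ classifying the principal $K(\mathbb{Z},4k+2)$-fibration $E_{4k+2} \to E_{4k+1}$ in the Moore--Postnikov tower for $BU(n) \to BSO(2n)$. Then I would establish the relation $p^*W_{4k+3} = c\,\kappa$ in $H^{4k+3}(E_{4k+1};\mathbb{Z})$, where $p$ denotes the composite projection to $BSO(2n)$, with $c$ the integer predicted by the statement; the theorem then follows by naturality of the lift.

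In the stable range $i \leq 4k+2$, the only nonvanishing homotopy groups $\pi_i(SO(2n)/U(n))$ occur at $i = 4j+2$ for $0 \leq j \leq k$, so $E_{4k+1}$ is built from $BSO(2n)$ by $k$ successive principal $K(\mathbb{Z}, 4j+2)$-fibrations, $j = 0, \ldots, k-1$. I would extend the Serre spectral sequence computation above for $W_7$ inductively across these stages to describe the relevant portion of $H^{4k+3}(E_{4k+1};\mathbb{Z})$. Repeating the two observations used in the $W_7$ case -- that $h_{4k+2}^*$ is injective on $H^{4k+3}(-;\mathbb{Z})$, and that $W_{4k+3} = \beta w_{4k+2} = \beta \rho_2 c_{2k+1}$ vanishes on $BU(n)$ -- places $p^* W_{4k+3}$ in $\ker e_{4k+2}^* = \langle \kappa \rangle$, giving $p^* W_{4k+3} = c\,\kappa$ for some integer $c$.

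The main obstacle is pinning down the integer $c$. I would determine it by evaluating both classes on a test space, for instance by pulling back along a classifying map from a carefully chosen complex realizing a generator of $\pi_{4k+2}(BU(n)) \cong \mathbb{Z}$. The Bott integrality statement that $c_{2k+1}$ evaluates to $\pm (2k)!$ on the Bott generator $\beta_{2k+1} \in \pi_{4k+2}(BU)$, combined with the fact (from the long exact sequence of $SO(2n)/U(n) \to BU(n) \to BSO(2n)$) that the generator of $\pi_{4k+2}(SO(2n)/U(n))$ maps isomorphically to $\beta_{2k+1}$ in the stable range, accounts for the factor $(2k)!$ when $k$ is even. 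The extra halving when $k$ is odd is the more delicate point: I expect it to emerge from an additional mod-$4$ divisibility of $p_k - (-1)^k c_k^2$, applied through the Pontryagin--Chern identity $(-1)^k p_k = \sum_{i+j=2k}(-1)^i c_i c_j$ and the Wu-type reasoning of \Cref{integralSW}, reflecting the extra $2$-torsion present in $KO$-theory versus $KU$-theory in dimensions $4k+3 \equiv 7 \pmod 8$. This parity separation, and the careful tracking of signs and divisibilities through the iterated spectral sequences, is where I expect the argument to become technical.
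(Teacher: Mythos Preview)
Your setup is sound through the point where you obtain $p^*W_{4k+3} = c\,\kappa$ in $H^{4k+3}(E_{4k+1};\ZZ)$: the injectivity of $h_{4k+2}^*$ in degree $4k+3$ together with the vanishing of $W_{4k+3}$ on $BU(n)$ does force $p^*W_{4k+3}$ into the cyclic subgroup generated by $\kappa$. The gap is in your proposed determination of $c$. A map $S^{4k+2}\to BU(n)$ realizing the Bott generator lands in a space with $H^{4k+3}(S^{4k+2};\ZZ)=0$, so both $W_{4k+3}$ and $\mathfrak{o}_{4k+3}$ pull back to zero there and you extract no information about $c$. Bott integrality for $c_{2k+1}$ is a statement in degree $4k+2$, and you have not explained how to transport it to a nontrivial relation in degree $4k+3$; one would need a test space with both nonvanishing $H^{4k+3}$ and a computable obstruction class, and you have not produced one. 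Your account of the halving for odd $k$ is likewise speculative: \Cref{integralSW} concerns $W_{4m+1}$ and divisibility by $4$ in degree $4m$, and it is not clear how it yields a factor of $2$ in the relation between $W_{4k+3}$ and $\kappa$.

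The paper avoids this difficulty by a different route. Rather than computing inside the single tower for $BU(n)\to BSO(2n)$, it introduces the intermediate group $U(2k+1)=U(n)\cap SO(4k+2)$ and compares three Moore--Postnikov systems over $BSO(2n)$: those of $BU(n)$, $BU(2k+1)$, and $BSO(4k+2)$. The point is that $W_{4k+3}$ is already known to be the \emph{primary} $k$-invariant for $BSO(4k+2)\to BSO(2n)$ (the first obstruction to $2n-4k-2$ independent sections), so no spectral sequence computation of $H^{4k+3}(E_{4k+1})$ is needed. The factor $\ell$ is then read off purely homotopically: the map $SO(2n)/U(2k+1)\to SO(2n)/SO(4k+2)$ induces multiplication by $\ell$ on $\pi_{4k+2}\cong\ZZ$, and $\ell$ is exactly the order of the cokernel, namely $\pi_{4k+1}\bigl(SO(4k+2)/U(2k+1)\bigr)$. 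This is the first unstable homotopy group of $SO(4k+2)/U(2k+1)$, computed by Harris to be $\ZZ_{(2k)!}$ or $\ZZ_{(2k)!/2}$ according to the parity of $k$. Naturality of the transgression then gives $W_{4k+3}=\pm\ell\,\mathfrak{o}$ directly. So the parity-dependent halving is not a Wu-formula phenomenon but comes from the unstable homotopy of $SO(4k+2)/U(2k+1)$.
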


Before embarking on the proof, we want to point out that the assumption $4k+3 < 2n$ will be used (in the proof of \Cref{lemma}(b)) to identify the homotopy groups of $SO(2n)/U(n)$ in degrees $\leq 4k+2$ with those of $SO/U$ which are known by Bott periodicity. We want to relate the obstruction $\mathfrak{o} = \mathfrak{o}_{4k+3}$, which is the pullback of a $k$--invariant in the Moore--Postnikov system for $BU(n)\to BSO(2n)$, to the integral Stiefel--Whitney class $W_{4k+3}$. The latter naturally appears as a $k$--invariant in the Moore--Postnikov system for $BSO(4k+2) \to BSO(2n)$ as the primary obstruction to finding $2n-4k-2$ linearly independent sections of a rank $2n$ oriented real bundle \cite[\textsection 38]{St}. Now, there is no natural  map between $U(n)$ and $SO(4k+2)$ in either direction, so we consider an intermediate space, the intersection of $U(n)$ and $SO(4k+2)$, namely $U(2k+1)$. The inclusion of $U(2k+1)$ into $U(n)$ and $SO(4k+2)$ gives us the following commutative diagram:

\begin{equation}\label{maindiagram}
\begin{tikzcd}
SO(2n)/U(n) \arrow[d] &  &                                                             &  & SO(2n)/SO(4k+2) \arrow[d] \\
BU(n) \arrow[d]       &  & SO(2n)/U(2k+1) \arrow[rru, "r"] \arrow[llu, "q"'] \arrow[d] &  & BSO(4k+2) \arrow[d]       \\
BSO(2n)               &  & BU(2k+1) \arrow[d] \arrow[rru] \arrow[llu]                  &  & BSO(2n)                   \\
                      &  & BSO(2n) \arrow[rru, "id"] \arrow[llu, "id"']                &  &                          
\end{tikzcd}\end{equation}

We focus for the moment on the induced maps $q$ and $r$ on fibers. 

\begin{lemma}\label{lemma} We have the following: \begin{enumerate} \item[(a)] The map $q$ induces an isomorphism on $\pi_{\leq 4k+2}$. 
\item[(b)] $\pi_{4k+2}$ of each of $SO(2n)/U(n), SO(2n)/U(2k+1)$, and $SO(2n)/SO(4k+2)$ is isomorphic to $\mathbb{Z}$.
\item[(c)] $SO(2n)/SO(4k+2)$ is $(4k+1)$--connected, and $r$ induces multiplication by $\ell$ on $\pi_{4k+2}$, up to sign, where $\ell = (2k)!$ if $k$ is even, and $\ell = \tfrac{1}{2}(2k)!$ if $k$ is odd. \end{enumerate} \end{lemma}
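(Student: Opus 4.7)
The strategy throughout is to apply the long exact sequence in homotopy to suitable fibrations obtained by quotienting $SO(2n)$ by chains of subgroups, combined with the stable groups $\pi_*(SO/U)$ recorded earlier and the unstable formula (\ref{harris}). For part (a), we use the fibration
$$U(n)/U(2k+1) \to SO(2n)/U(2k+1) \xrightarrow{\,q\,} SO(2n)/U(n)$$
arising from $U(2k+1) \subset U(n) \subset SO(2n)$, whose fiber is the complex Stiefel manifold $V_{n-2k-1}(\mathbb{C}^n)$. Iterating the sphere-bundle fibrations $V_j(\mathbb{C}^n) \to V_{j-1}(\mathbb{C}^n)$ with fiber $S^{2n-2j+1}$ shows that $V_j(\mathbb{C}^n)$ is $2(n-j)$--connected, so our fiber is $(4k+2)$--connected, and the long exact sequence yields the desired isomorphism on $\pi_{\leq 4k+2}$.

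For part (b), the hypothesis $4k+3 < 2n$ gives $4k+2 \leq 2n-2$, so $\pi_{4k+2}(SO(2n)/U(n))$ lies in the stable range where it equals $\pi_{4k+2}(SO/U) = \mathbb{Z}$; part (a) transfers this to $\pi_{4k+2}(SO(2n)/U(2k+1)) \cong \mathbb{Z}$. The remaining space $SO(2n)/SO(4k+2)$ is the real Stiefel manifold $V_{2n-4k-2}(\mathbb{R}^{2n})$, whose first nontrivial homotopy group lies in the even degree $4k+2$ and is classically isomorphic to $\mathbb{Z}$ (the base-case sphere-bundle computation for $V_2(\mathbb{R}^n)$ with $n$ even extends inductively through the sphere-bundle decomposition of $V_m(\mathbb{R}^n)$).

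Part (c) combines the above. The connectivity of $V_{2n-4k-2}(\mathbb{R}^{2n})$ is $(2n)-(2n-4k-2)-1 = 4k+1$. For the multiplicative factor in $r_*$, we apply the long exact sequence of
$$SO(4k+2)/U(2k+1) \to SO(2n)/U(2k+1) \xrightarrow{\,r\,} SO(2n)/SO(4k+2),$$
focusing on the segment
$$\pi_{4k+2}(SO(2n)/U(2k+1)) \xrightarrow{r_*} \pi_{4k+2}(SO(2n)/SO(4k+2)) \xrightarrow{\partial} \pi_{4k+1}(SO(4k+2)/U(2k+1)) \to \pi_{4k+1}(SO(2n)/U(2k+1)).$$
By part (a) and stability (using $4k+1 \leq 2n-2$), the rightmost group equals $\pi_{4k+1}(SO/U) = 0$. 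The group $\pi_{4k+1}(SO(4k+2)/U(2k+1))$ is unstable and is read off from (\ref{harris}) with $n$ replaced by $2k+1$: since $2k+1$ is odd, only the cases $2k+1 \equiv 1, 3 \pmod 4$ apply, yielding $\mathbb{Z}/(2k)!$ for $k$ even and $\mathbb{Z}/\tfrac{1}{2}(2k)!$ for $k$ odd, i.e.\ $\mathbb{Z}/\ell$. Exactness then forces $\partial$ to be surjective with kernel $\ell\mathbb{Z}$, and since both source and target of $r_*$ are $\mathbb{Z}$ by (b), $r_*$ is multiplication by $\pm\ell$.

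The only input requiring any non-trivial work beyond the standard assembly of long exact sequences is the unstable identification $\pi_{4k+1}(SO(4k+2)/U(2k+1)) \cong \mathbb{Z}/\ell$, which controls the integer $\ell$ along with its dependence on the parity of $k$, and which rests on Harris's calculation quoted as (\ref{harris}); everything else is bookkeeping together with the standard connectivities of the complex and real Stiefel manifolds.
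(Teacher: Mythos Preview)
Your proof is correct and follows essentially the same approach as the paper's: the same two fibrations are used for (a) and (c), the same stable identifications and the same appeal to (\ref{harris}) are made, and the exact sequence $\mathbb{Z} \xrightarrow{r_*} \mathbb{Z} \to \mathbb{Z}_\ell \to 0$ is analyzed identically. The only difference is that you spell out a few connectivity statements (for the complex and real Stiefel manifolds) that the paper simply cites as well known.
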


\begin{proof} (a) We have the fiber bundle $$U(n)/U(2k+1) \to SO(2n)/U(2k+1) \xrightarrow{q} SO(2n)/U(n).$$ The complex Stiefel manifold $U(n)/U(2k+1)$ is $(4k+2)$--connected, so $q$ induces an isomorphism on $\pi_{\leq 4k+2}$.

(b) Since $4k+2 \leq 2n-2$, we have $\pi_{4k+2}SO(2n)/U(n) \cong \pi_{4k+2}SO/U \cong \mathbb{Z}$. From part (a) we conclude that $\pi_{4k+2} SO(2n)/U(2k+1) \cong \mathbb{Z}$. Finally, the Stiefel manifold $SO(2n)/SO(4k+2)$ is well known to be $(4k+1)$--connected with $\pi_{4k+2} SO(2n)/SO(4k+2) \cong \mathbb{Z}$.

(c) We have the fiber bundle \begin{equation}\label{mainfiber} SO(4k+2)/U(2k+1) \to SO(2n)/U(2k+1) \xrightarrow{r} SO(2n)/SO(4k+2) \end{equation} and the following part of the associated long exact sequence in homotopy groups: \begin{equation*}
\text{\footnotesize $\pi_{4k+2} SO(2n)/U(2k+1) \xrightarrow{r_*} \pi_{4k+2} SO(2n)/SO(4k+2) \to \pi_{4k+1} SO(4k+2)/U(2k+1) \to \pi_{4k+1} SO(2n)/U(2k+1)$}
\end{equation*}

By part (b), the first two groups are isomorphic to $\mathbb{Z}$. The third group is the first unstable homotopy group of $SO(4k+2)/U(2k+1)$ listed in \eqref{harris}. Namely, it is isomorphic to  $\mathbb{Z}_\ell$, where $\ell = (2k)!$ if $k$ is even, and $\ell = \tfrac{1}{2}(2k)!$ if $k$ is odd. The fourth group is, by part (a), isomorphic to $\pi_{4k+1}SO(2n)/U(n) \cong \pi_{4k+1} SO/U = 0$, as $4k+1 < 2n-2$. Therefore, the above exact sequence is given by $$\mathbb{Z} \xrightarrow{r_*} \mathbb{Z} \to \mathbb{Z}_\ell \to 0,$$ and so the induced map $\mathbb{Z} \xrightarrow{r_*} \mathbb{Z}$ on $\pi_{4k+2}$ is, up to sign, multiplication by $\ell$. \end{proof}

%We will now determine $\pi_{4k+2} SO(2n)/U(2k+1)$ and $\pi_{4k+1} SO(2n)/U(2k+1)$, with the aim of determining the effect of $r$ on $\pi_{4k+2}$. First, consider the fiber bundle $$SO(2n)/U(2k+1) \to SO(2n+1)/U(2k+1) \to SO(2n+1)/SO(2n) = S^{2n}.$$ From the long exact sequence in homotopy, since $4k+3 < 2n$, we have in particular that the groups $\pi_{\leq 4k+2} SO(2n)/U(2k+1)$ are stable in $n$, i.e. isomorphic, via the natural map, to $\pi_{\leq 4k+2} SO/U(2k+1)$. Similarly, from the fiber bundles $$U(2r+2)/U(2r+1) = S^{4r+3} \to SO/U(2r+1) \to SO/U(2r+2)$$ for $r\geq k$, we obtain $$\pi_{\leq 4k+2} SO/U(2r+1) \cong \pi_{\leq 4k+2} SO/U.$$

%Therefore, we have \begin{align*} \pi_{4k+1} SO(2n)/U(2k+1) &\cong \pi_{4k+1} SO/U(2k+1) \cong \pi_{4k+1} SO/U = 0, \\ \pi_{4k+2} SO(2n)/U(2k+1) &\cong \pi_{4k+2} SO/U(2k+1) \cong \pi_{4k+2} SO/U \cong \mathbb{Z}.\end{align*} and so the following part of the long exact sequence in homotopy groups associated to (\ref{mainfiber}), 

%\begin{equation*}
%\text{\footnotesize $\pi_{4k+2} SO(2n)/U(2k+1) \xrightarrow{r_*} \pi_{4k+2} SO(2n)/SO(4k+2) \to \pi_{4k+1} SO(4k+2)/U(2k+1) \to \pi_{4k+1} SO(2n)/U(2k+1)$}
%\end{equation*} is given by $$\mathbb{Z} \xrightarrow{r_*} \mathbb{Z} \to \mathbb{Z}_\ell \to 0,$$ where $\ell = (2k)!$ if $k$ is even, and $\ell = \tfrac{1}{2}(2k)!$ if $k$ is odd, by (\ref{harris}). Therefore, the induced map $\mathbb{Z} \xrightarrow{r_*} \mathbb{Z}$ on $\pi_{4k+2}$ is, up to sign, multiplication by $\ell$. 

Consider now the following subdiagrams of diagram \eqref{maindiagram}:

\begin{equation}\label{q} \begin{tikzcd}
BU(2k+1) \arrow[r] \arrow[d] & BU(n) \arrow[d] \\
BSO(2n) \arrow[r, "id"]            & BSO(2n)        
\end{tikzcd}\end{equation} inducing the map $SO(2n)/U(2k+1) \xrightarrow{q} SO(2n)/U(n)$ on fibers, and \begin{equation}\label{r} \begin{tikzcd}
BU(2k+1) \arrow[d] \arrow[r] & BSO(4k+2) \arrow[d] \\
BSO(2n) \arrow[r, "id"]            & BSO(2n)            
\end{tikzcd}\end{equation} inducing the map $SO(2n)/U(2k+1) \xrightarrow{r} SO(2n)/SO(4k+2)$ on fibers. Let $$F^1 = SO(2n)/U(2k+1), \,\,\, \, \, F^2 = SO(2n)/U(n), \,\,\, \, \, F^3 = SO(2n)/SO(4k+2)$$ so that $F^1 \xrightarrow{q} F^2$ and $F^1 \xrightarrow{r} F^3$. 

\subsection*{Induced lift and effect on the $k$-invariant in diagram \eqref{q}} Consider the induced map of Moore--Postnikov systems corresponding to diagram \eqref{q}:

$$\begin{tikzcd}
F_2^1 \arrow[rd] & \vdots \arrow[d]           & F_2^2 \arrow[rd] & \vdots \arrow[d] \\
                 & E_2^1 \arrow[d] \arrow[rr] &                  & E_2^2 \arrow[d]  \\
                 & BSO(2n) \arrow[rr]         &                  & BSO(2n)         
\end{tikzcd}$$

Since $F^1 \xrightarrow{q} F^2$ induces isomorphisms on $\pi_{\leq 4k+2}$ by \Cref{lemma}(a), from the commutative diagram $$\begin{tikzcd}
F^1 \arrow[d, "g^1_s"] \arrow[r, "q"] & F^2 \arrow[d, "g^2_s"] \\
F^1_s \arrow[r]              & F^2_s        
\end{tikzcd}$$

and from $\pi_{>s}F^i_s = 0$, $i=1,2$ (recall \Cref{ordinarycase}), we see that the maps $F^1_s \to F^2_s$ are weak homotopy equivalences for $s \leq 4k+2$. From the induced map of long exact sequences in homotopy groups associated to the diagram $$\begin{tikzcd}
F_s^1 \arrow[r] \arrow[d] & F_s^2 \arrow[d] \\
E_s^1 \arrow[r] \arrow[d] & E_s^2 \arrow[d] \\
E_{s-1}^1 \arrow[r]       & E_{s-1}^2      
\end{tikzcd}$$ repeated application of the five lemma shows that the map $E_s^1 \to E_s^2$ is a weak homotopy equivalence for $s \leq 4k+2$. 

By assumption, we have a lift of the map $X \to BSO(2n)$ up to $E_{4k+1}^2$. Choosing a homotopy inverse of $E_{4k+1}^1 \to E_{4k+1}^2$ then gives us a lift to $E^1_{4k+1}$ in the left hand Moore--Postnikov system.

Our obstruction $\mathfrak{o}$ in the statement of \Cref{TheoremI} is (the pull back to $X$ of) the $k$--invariant, which takes the form $E_{4k+1}^2 \to K(\ZZ, 4k+3)$ by \Cref{lemma}(b). Since the homotopy fiber of a $k$--invariant in a Moore--Postnikov system is the next stage in the system, from the induced map on long exact sequences in homotopy groups for the diagram

$$\begin{tikzcd}
E_{4k+2}^1 \arrow[rr] \arrow[d]  &                                  & E_{4k+2}^2 \arrow[d]  &                       \\
E_{4k+1}^1 \arrow[rr] \arrow[rd] &                                  & E_{4k+1}^2 \arrow[rd, "\mathfrak{o}"] &                       \\
                                 & {K(\mathbb{Z}, 4k+3)} \arrow[rr] &                       & {K(\mathbb{Z}, 4k+3)}
\end{tikzcd}$$

we have that the bottom map $K(\ZZ, 4k+3) \to K(\ZZ, 4k+3)$ is a homotopy equivalence by the five lemma. Therefore, it induces an isomorphism $H^{4k+3}(X;\ZZ) \to H^{4k+3}(X;\ZZ)$; let us denote the preimage of $\mathfrak{o}$ under this isomorphism simply by $\mathfrak{o}$.

\subsection*{Induced lift and effect on the $k$-invariant in diagram \eqref{r}} Now we turn to the map of Moore--Postnikov systems induced by the diagram \eqref{r}. This time, as opposed to diagram \eqref{q}, the induced map of homotopy fibers is not highly connected. Regardless, our chosen lift of $X \to BSO(2n)$ through $E_{4k+1}^1$ will induce a lift of $X \to BSO(2n)$ through $E_{4k+1}^3$ by post-composition. (In fact, this holds for an even more immediate reason, namely that $E_{4k+1}^3 \to BSO(2n)$ is a homotopy equivalence. For uniformity of notation we will put this observation on hold for now.)

$$\begin{tikzcd}
F_{4k+2}^1 \arrow[rd] \arrow[rr] &                                             & F_{4k+2}^3 \arrow[rd]            &                                  &                       \\
                                 & E_{4k+2}^1 \arrow[dd] \arrow[rr]            &                                  & E_{4k+2}^3 \arrow[dd]            &                       \\
                                 &                                             &                                  &                                  &                       \\
                                 & E_{4k+1}^1 \arrow[rr] \arrow[rd] \arrow[dd] &                                  & E_{4k+1}^3 \arrow[rd] \arrow[dd] &                       \\
                                 &                                             & {K(\mathbb{Z}, 4k+3)} \arrow[rr] &                                  & {K(\mathbb{Z}, 4k+3)} \\
                                 & \vdots \arrow[d]                            &                                  & \vdots \arrow[d]                 &                       \\
                                 & BSO(2n) \arrow[rr]                          &                                  & BSO(2n)                          &                      
\end{tikzcd}$$

In the diagram $$\begin{tikzcd}
F^1 \arrow[d, "g^1_{4k+2}"] \arrow[r, "r"] & F^3 \arrow[d, "g^3_{4k+2}"] \\
F_{4k+2}^1 \arrow[r]         & F_{4k+2}^3   
\end{tikzcd}$$ the vertical arrows induce isomorphisms on $\pi_{\leq 4k+2}$ and $r$ induces the map $\mathbb{Z} \xrightarrow{\times \ell} \mathbb{Z}$ on $\pi_{4k+2}$ by \Cref{lemma}(c). Therefore, the map $F_{4k+2}^1 \to F_{4k+2}^3$ induces $\mathbb{Z} \xrightarrow{\times \ell} \mathbb{Z}$ on $\pi_{4k+2}$, up to sign.  

\begin{equation}\label{big}\begin{tikzcd}
{K(\mathbb{Z}, 4k+2)} \arrow[rr] \arrow[dd] \arrow[rd]                                                  &                                                                       & F_{4k+2}^1 \arrow[dd] \arrow[rd] &                       \\
                                                                                                        & {K(\mathbb{Z}, 4k+2)} \arrow[rr] \arrow[dd]                           &                                  & F_{4k+2}^3 \arrow[dd] \\
E_{4k+2}^1 \arrow[dd] \arrow[rr, "\ \ \ \ \ \ \ \ \ \ \ \ \ \ \ \ \ \ \ \ \ \ \ \ \ \ \ id"] \arrow[rd] &                                                                       & E_{4k+2}^1 \arrow[dd] \arrow[rd] &                       \\
                                                                                                        & E_{4k+2}^3 \arrow[rr, "\!\!\!\!\!\!\!\!\!\!\!\!\!\!\! id"] \arrow[dd] &                                  & E_{4k+2}^3 \arrow[dd] \\
E_{4k+1}^1 \arrow[rr] \arrow[rd]                                                                        &                                                                       & BSO(2n) \arrow[rd, "id"]         &                       \\
                                                                                                        & E_{4k+1}^3 \arrow[rr]                                                 &                                  & BSO(2n)              
\end{tikzcd}\end{equation}

Since the fibers $F_{4k+1}^1$ and $F_{4k+1}^3$ of the maps $E_{4k+1}^1 \to BSO(2n)$ and $E_{4k+1}^3 \to BSO(2n)$ have $\pi_{> 4k+1} = 0$, we see that the morphisms $E_{4k+1}^i \to BSO(2n)$, for $i = 1,3$ induce injections on $\pi_{4k+2}$ and isomorphisms on $\pi_{\geq 4k+3}$. Now consider the following part of the map between long exact sequences in homotopy induced either by the back face (for $i=1$) or the front face (for $i=3$) of the above diagram:

$$\begin{tikzcd}
\pi_{4k+3}E_{4k+2}^i \arrow[rr, "\sim"] \arrow[d]    &  & \pi_{4k+3}E_{4k+2}^i \arrow[d] \\
\pi_{4k+3}E_{4k+1}^i \arrow[rr, "\sim"] \arrow[d]    &  & \pi_{4k+3}BSO(2n) \arrow[d]    \\
{\pi_{4k+2}K(\mathbb{Z}, 4k+2)} \arrow[rr] \arrow[d] &  & \pi_{4k+2}F_{4k+2}^i \arrow[d] \\
\pi_{4k+2}E_{4k+2}^i \arrow[rr, "\sim"] \arrow[d]    &  & \pi_{4k+2}E_{4k+2}^i \arrow[d] \\
\pi_{4k+2}E_{4k+1}^i \arrow[rr, hook]                &  & \pi_{4k+2}BSO(2n)             
\end{tikzcd}$$

From the five lemma we conclude that the induced maps $\pi_{4k+2}K(\ZZ, 4k+2) \to \pi_{4k+2}F_{4k+2}^i$, for $i=1,3$, are isomorphisms $\ZZ \to \ZZ$. From the top face of diagram \eqref{big}, we conclude that the induced map $\pi_{4k+2}K(\ZZ, 4k+2) \to \pi_{4k+2}K(\ZZ, 4k+2)$ is the map $\ZZ \xrightarrow{\times \ell} \ZZ$ given by multiplication by $\ell$, up to sign. 

Now, recall that the fiber of the map $BSO(4k+2) \to BSO(2n)$, namely the Stiefel manifold $SO(2n)/SO(4k+2)$, is $(4k+1)$--connected, and $\pi_{4k+2} SO(2n)/SO(4k+2) \cong \mathbb{Z}$. Therefore $E_{4k+1}^3 \to BSO(2n)$ is a homotopy equivalence, and since $BSO(4k+2) \to E_{4k+2}^3$ induces an injection on $H^{4k+3}(-;\ZZ)$, we see that the $k$-invariant $E_{4k+1}^3 \to K(\ZZ, 4k+3)$ is the integral Stiefel--Whitney class $W_{4k+3}$. Recall, from the Serre long exact sequence for the fibration $$E^3_{4k+2} \xrightarrow{e_{4k+2}} E^3_{4k+1} \xrightarrow{\kappa} K(\ZZ, 4k+3)$$ we have that the kernel of $e_{4k+2}^*$ in degree $4k+3$ consists of multiples of the $k$-invariant $\kappa$. Therefore, $W_{4k+3} = m\kappa$ for some integer $m$. Since $W_{4k+3}$ has non-zero mod 2 reduction, namely $w_{4k+3}$, we see that $m$ is odd. Now note that $m(\kappa - W_{4k+3}) = 0$ since $W_{4k+3}$ is two-torsion. As the integral cohomology of $BSO(2n)$ contains no odd torsion \cite{B82}, we see that $\kappa = W_{4k+3}$.

Consider now at last the map of fibrations $$\begin{tikzcd}
{K(\mathbb{Z}, 4k+2)} \arrow[d] \arrow[r] & {K(\mathbb{Z}, 4k+2)} \arrow[d] \\
E_{4k+2}^1 \arrow[d] \arrow[r]            & E_{4k+2}^3 \arrow[d]            \\
E_{4k+1}^1 \arrow[r, "\varphi_{4k+1}"]                      & E_{4k+1}^3                     
\end{tikzcd}$$ 

where $\varphi_{4k+1}$ is the induced map between the stages $E_{4k+1}^1$ and $E_{4k+1}^3$ of the Moore--Postnikov systems. The $k$--invariants are given by the transgression $\tau$ of the corresponding fiber generator $\iota \in H^{4k+2}(K(\ZZ, 4k+2), 4k+2)$ in the Serre spectral sequence. Since the map $K(\ZZ, 4k+2) \to K(\ZZ, 4k+2)$ on fibers induces multiplication by $\pm\ell$ on $H^{4k+2}(-;\ZZ)$, by naturality of the transgression we have $$\varphi_{4k+1}^*W_{4k+3} = \varphi_{4k+1}^*(\tau(\iota)) = \tau(\pm \ell \iota) = \pm \ell \tau(\iota) = \pm \ell \mathfrak{o}.$$ Recall that $\ell = (2k)!$ if $k$ is even, and $\ell = \tfrac{1}{2}(2k)!$ if $k$ is odd. Since the obstruction $\mathfrak{o}$ is only well-defined up to sign, due to the non-trivial automorphism of $K(\ZZ, 4k+3)$ corresponding to multiplication by $-1$, this proves \Cref{TheoremI}.

\section{Proof of Massey's Theorem II}\label{TheoremIIsection}

Suppose $\xi \to X$ is an $SO(4k)$-bundle with a lift to $BU(2k)$ over the $(4k-1)$--skeleton of $X$. The obstruction to extending the section over the $4k$--skeleton is the pullback of the $k$-invariant $E_{4k-2} \to K(\pi_{4k-1} SO(4k)/U(2k), 4k)$ in the Moore--Postnikov system of $BU(2k) \to BSO(4k)$. If $k$ is odd, then $\pi_{4k-1} SO(4k)/U(2k) \cong \ZZ$, and if $k$ is even, $\pi_{4k-1} SO(4k)/U(2k) \cong \ZZ \oplus \ZZ_2$.

As a warm up, we recover the following refinement of Wu's theorem \cite[Théorème IV.10]{W52}, proved also in \cite[4.6]{HiHo58}, which is the $k = 1$ case of \cite[Theorem II]{M61}:

\begin{thm}\label{warmupThmII} Suppose $\xi \to X$ is an $SO(4)$ bundle over a CW complex $X$, and that we have reduced the structure group over the three--skeleton of $X$ to $U(2)$. The second obstruction $\mathfrak{o}$ to reducing the structure group to $U(2)$ satisfies $$p_1 - c_1^2 + 2e = 4\mathfrak{o}.$$ %and $\mathfrak{o}$ is the obstruction to extending the lift to $BU(2)$ over the four--skeleton of $X$. 
\end{thm}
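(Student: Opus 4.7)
The plan is to identify $\mathfrak{o}$ as the generator of $\ker h_2^*: H^4(E_2;\mathbb{Z}) \to H^4(BU(2);\mathbb{Z})$, where $E_2 \simeq BSpin^c(4)$ is the second stage of the Moore--Postnikov system for $BU(2) \to BSO(4)$, and then to pin down that generator by comparing fiber restrictions with the class $p_1 + 2e - c_1^2$.

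First I would observe that $p_1 + 2e - c_1^2$ lies in $\ker h_2^*$: the pullbacks $h_2^* p_1 = c_1^2 - 2c_2$, $h_2^* e = c_2$, and $h_2^* c_1 = c_1$ give $(c_1^2 - 2c_2) + 2c_2 - c_1^2 = 0$. Here $c_1 \in H^2(E_2;\mathbb{Z})$ denotes the tautological integral lift of $w_2$, i.e., the first Chern class of the determinant of the universal $\mathrm{spin}^c$ structure. To upgrade this to an equation $p_1 + 2e - c_1^2 = m\mathfrak{o}$, I would factor $h_2$ as $BU(2) \xrightarrow{h_3} E_3 \xrightarrow{e_3} E_2$. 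The Serre spectral sequence for the principal $K(\mathbb{Z},3)$-fibration $e_3$ (classified by $\mathfrak{o}$) gives $\ker e_3^* = \mathbb{Z}\mathfrak{o}$ on $H^4$ via transgression of the fundamental class, while $h_3^*$ is injective on $H^4$ because the homotopy fiber $F_3'$ of $h_3$ is the $3$-connected cover of $F \cong SO(4)/U(2) \simeq S^2$, hence $3$-connected with $\pi_4(F_3') = \pi_4(S^2) = \mathbb{Z}_2$, forcing $H^{\leq 4}(F_3'; \mathbb{Z}) = 0$.

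The main step is computing $m$ by restricting both sides to the fiber $F_2 = K(\mathbb{Z},2)$ of $E_2 \to BSO(4)$. The classes $p_1$ and $e$ pulled back from $BSO(4)$ vanish on $F_2$, while the class $c_1$ restricts to $2\alpha$, with $\alpha$ generating $H^2(K(\mathbb{Z},2); \mathbb{Z})$: the kernel of $Spin^c(4) \to SO(4)$ is the copy of $U(1)$ sitting in $Spin^c(4) = (Spin(4) \times U(1))/\mathbb{Z}_2$ as $\{[1,u]\}$, and the determinant homomorphism $Spin^c(4) \to U(1)$, $[A,u] \mapsto u^2$, is the squaring map on this subgroup. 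Hence $(p_1 + 2e - c_1^2)|_{F_2} = -4\alpha^2$. On the other hand, by naturality of Moore--Postnikov systems (\Cref{moorepostnikov}), $\mathfrak{o}|_{F_2}$ equals the degree-four Postnikov $k$-invariant of $F \simeq S^2$ in $H^4(K(\mathbb{Z},2); \mathbb{Z}) = \mathbb{Z}\alpha^2$, which is $\pm\alpha^2$ by the Hopf-invariant-one property of the Hopf map $S^3 \to S^2$. Comparing gives $|m| = 4$, and since $\mathfrak{o}$ is defined only up to sign, the appropriate choice yields $4\mathfrak{o} = p_1 + 2e - c_1^2$. The main obstacle is tracing the factor of $4$ correctly, which arises entirely from $c_1|_{F_2} = 2\alpha$ rather than $\alpha$, reflecting that the $U(1)$ subgroup in $Spin^c(4)$ is a double cover of its image under the determinant.
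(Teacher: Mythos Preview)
Your argument is correct and takes a genuinely different route from the paper's. Both proofs hinge on the fiber restriction $i^*: H^*(E_2;\ZZ) \to H^*(K(\ZZ,2);\ZZ)$ and the fact that the generator of $H^2(E_2;\ZZ)$ restricts to $2\alpha$, but they diverge from there. The paper first builds an explicit $\ZZ$--basis $\{p_1,e,\beta\}$ of $H^4(E_2;\ZZ)$ via the Serre spectral sequence, writes $\gamma^2 = ap_1 + be + 4\beta$, and then invokes Wu's Pontryagin square identity $\mathfrak{P}(w_2) = \rho_4(p_1) + \theta_2(w_4)$ to pin down $a\equiv 1$, $b\equiv 2 \pmod 4$; a separate argument using torsion-freeness of $H^*(BU(2);\ZZ)$ then shows the $k$--invariant is $\pm\beta$. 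You instead verify $p_1 + 2e - c_1^2 \in \ker h_2^*$ by a direct Chern-class computation, identify $\ker h_2^* = \ZZ\mathfrak{o}$ via the factorization through $E_3$, and extract the multiplier in one stroke by restricting to the fiber: $(p_1 + 2e - c_1^2)|_{F_2} = -4\alpha^2$ because the determinant map is squaring on the central $U(1)\subset Spin^c(4)$, while $\mathfrak{o}|_{F_2}$ is the first Postnikov $k$--invariant of $SO(4)/U(2)\simeq S^2$, which equals $\pm\alpha^2$ by Hopf invariant one. Your approach avoids the Pontryagin square entirely and gives a pleasant geometric explanation for the factor of $4$; the paper's approach, on the other hand, is set up precisely to parallel the proof of the general \Cref{theoremII}, where the Pontryagin square formula is the essential input and no low-dimensional coincidence like $SO(4)/U(2)\simeq S^2$ is available.
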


\begin{proof}
Consider the beginning of the Moore--Postnikov system for the map $BU(2) \to BSO(4)$, 

$$\begin{tikzcd}
                             & BU(2) \arrow[d]                &                    \\
                             & \vdots \arrow[d]               &                    \\
                             & E_3 \arrow[d]                  &                    \\
{K(\mathbb{Z}, 2)} \arrow[r] & E_2 \arrow[d] \arrow[r, "\kappa"] & {K(\mathbb{Z}, 4)} \\
                             & BSO(4) \arrow[r, "W_3"]               & {K(\mathbb{Z}, 3)}
\end{tikzcd}$$

We compute $H^4(E_2;\mathbb{Z})$ using the Serre spectral sequence. (Recall $E_2 = BSpin^c(4)$; for a presentation of the cohomology of $BSpin^c(n)$ for all $n$, we refer the reader to \cite{D18}.) The third page of the spectral sequence in integral cohomology associated to $K(\ZZ, 2) \to E_2 \to BSO(4)$ is given by

$$\begin{tikzcd}
{}                                                                                &                                                       &   &   &                                               &                                           &    \\
4                                                                                 & \large \mathbb{Z}\langle \alpha^2 \rangle             &   &   &                                               &                                           &    \\
3                                                                                 &                                                       &   &   &                                               &                                           &    \\
2                                                                                 & \large \mathbb{Z}\langle \alpha \rangle \arrow[rrrdd] &   &   & \large \mathbb{Z}_2\langle W_3 \alpha \rangle &                                           &    \\
1                                                                                 &                                                       &   &   &                                               &                                           &    \\
0                                                                                 & \large \mathbb{Z}\langle 1 \rangle                    &   &   & \large \mathbb{Z}_2\langle W_3 \rangle        & {\large \mathbb{Z}\langle p_1, e \rangle} &    \\
{} \arrow[rrrrrr, no head, shift left=8, shorten >=25pt] \arrow[uuuuuu, no head, shift right=8] & 0                                                     & 1 & 2 & 3                                             & 4                                         & {}
\end{tikzcd}$$

We have $d_3(\alpha) = W_3$, and so $d_3(2\alpha) = 0 $ and $d_3(\alpha^2) = 2\alpha W_3 = 0$. %Note that on $BSO(4)$, we have $W_5 = 0$, since $e$ is an integral lift of $w_4$. 

Denote the inclusion $K(\ZZ, 2) \to E_2$ by $i$. We conclude now from the spectral sequence that there exists a unique class $\gamma \in H^2(E_2; \ZZ)$ such that $i^*\gamma = 2\alpha$, and that there exists a class $\beta \in H^4(E_2; \ZZ)$ such that $i^*\beta = \alpha^2$ and $\{p_1, e, \beta \}$ is a $\ZZ$-basis for $H^4(E_2;\ZZ) \cong \ZZ^3$. Now, $\gamma^2$ is a linear combination of $p_1$, $e$, and $\beta$, but since $i^*\gamma^2 = 4\alpha^2 = 4i^*\beta$ and $i^*p_1 = i^*e = 0$, we see that $$\gamma^2 = ap_1 + be + 4\beta$$ for some integers $a,b$.

Since $BU(2) \xrightarrow{h_2} E_2$ induces an isomorphism on $H^2(-;\ZZ)$ and $H^2(-;\ZZ_2)$, we have $h_2^* \gamma = \pm c_1$ and hence $\rho_2(\gamma) = w_2$. The Pontryagin square formula \eqref{psquare} for $m=1$ reads $$\mathfrak{P}(w_2) = \rho_4(p_1) + \theta_2(w_4).$$ By \Cref{pontryaginsquarelemma}, we have $\mathfrak{P}(w_2) = \rho_4(\gamma^2)$, and by equation \eqref{2x-identity}, we have $\theta_2(w_4) = \rho_4(2e)$ since $\rho_2(e) = w_4$. Thus, we have $$\rho_4(\gamma^2) = \rho_4(p_1) + \rho_4(2e).$$ Therefore $\gamma^2 - p_1 - 2e = (a-1)p_1 + (b-2)e + 4\beta$ is divisible by four, so we have that $a-1$ and $b-2$ are divisible by four. By a change of basis preserving $p_1$ and $e$ and relabelling $\beta + \tfrac{a-1}{4} p_1 + \tfrac{b-2}{4} e$ to $\beta$, we have $$\gamma^2 = p_1 + 2e + 4\beta.$$

Now, the $k$-invariant $\kappa$ equals $kp_1 + \ell e + m \beta$, for some integers $k,\ell, m$. Pulling this class back by the map $BU(2) \xrightarrow{h_2} E_2$ must yield zero. Now, we have $4\kappa = 4kp_1 + 4\ell e + m(\gamma^2 - p_1 - 2e)$, and so  $$0 = 4h_2^*\kappa = 4k(c_1^2-2c_2) + 4\ell c_2 + m(c_1^2 - (c_1^2-2c_2) - 2c_2) = 4kc_1^2 + (4\ell - 8k)c_2.$$ Therefore, since $\{c_1^2, c_2\}$ is a $\ZZ$-basis for $H^4(BU(2);\ZZ) \cong \ZZ^2$, it follows that $k = \ell = 0$. We conclude $\kappa = m\beta$, and so $m\beta$ generates the kernel of the map $H^4(E_2;\ZZ) \xrightarrow{e_3^*} H^4(E_3;\ZZ)$. The map $BU(2) \to E_3$ induces an injection on $H^4(-;\ZZ)$. If $m = 0$, then $H^4(E_3; \mathbb{Z})$ would be isomorphic to $\mathbb{Z}^3$, which cannot inject into $H^4(BU(2);\mathbb{Z}) \cong \mathbb{Z}^2$. If $|m| > 1$, $\beta$ would pull back to be a torsion class in the torsion-free group $H^4(BU(2);\ZZ)$. Therefore $\kappa = \pm \beta$. That is, the $k$-invariant $\kappa$ satisfies (up to sign, as it is only well-defined up to sign) $$4\kappa = \gamma^2 - p_1 - 2e.$$ Pulled back to $X$, this becomes $4\mathfrak{o} = c_1^2 - p_1 - 2e$, as we have a lift all the way to $BU(2)$ over the three--skeleton of $X$, and $h_2^*\gamma = \pm c_1$. \end{proof}

\begin{ex} Consider the complex projective plane $\overline{\mathbb{CP}^2}$ with its opposite orientation. Choose a generator $\alpha$ for $H^2(\overline{\mathbb{CP}^2};\ZZ) \cong \ZZ$. Then $-\alpha^2$ is the oriented generator for $H^4(\overline{\mathbb{CP}^2};\ZZ) \cong \ZZ$, i.e. $\langle -\alpha^2, [\overline{\mathbb{CP}^2}]\rangle = 1$. Suppose we have a reduction of structure group for the tangent bundle to $U(2)$ over the three--skeleton. We have $p_1 = 3\alpha^2$; furthermore, $w_2(\overline{\mathbb{CP}^2}) = \rho_2(\alpha)$, so $c_1 = (2m+1)\alpha$ for some integer $m$. The Euler class is given by $e = -3\alpha^2$. Then $$2e - c_1^2 + p_1 = (-6-(2m+1)^2 + 3)\alpha^2,$$ which evaluates to $4(m^2+m+1)$ when paired with the fundamental class $[\overline{\mathbb{CP}^2}]$. This is divisible by four, but non-zero for all $m$. The obstruction $\mathfrak{o}$ is, up to sign, $(m^2 + m + 1)\alpha^2$.

By contrast, suppose we have a reduction to $U(2)$ of the structure group of the tangent bundle to $\mathbb{CP}^2$ over its three--skeleton. Then $c_1 = (2m+1)\alpha$ as before, but $\langle \alpha^2, [\mathbb{CP}^2] \rangle = 1$ and $e = 3\alpha^2$. We have $$\langle 2e - c_1^2 + p_1, [\mathbb{CP}^2] \rangle = 4(2 - m^2 - m).$$ Therefore the obstruction $\mathfrak{o}$ vanishes for $m = -2$ and $m = 1$, i.e. $c_1 = \pm 3 \alpha$. \end{ex}

Generally, for any $k$, consider the class $\sum_{i=0}^{2k} (-1)^i c_ic_{2k-i} - (-1)^k p_k$. Here, $c_i$, for $0\leq i \leq 2k-1$, denotes the Chern classes of the $U(n)$ bundle induced over the $(4k-1)$--skeleton of $X$, while $c_{2k}$ denotes the Euler class of $\xi$. 

\begin{thm}\cite[Theorem II]{M61}\label{theoremII} Suppose $n = 2k$. Let $\theta \to X$ be the $SO(2n)/U(n)$ bundle associated to an oriented rank $2n$ real vector bundle $\xi$ over $X$, and let $s$ be a section of the bundle over the $(2n-1)$--skeleton of $X$. The obstruction $\mathfrak{o}_{2n} = \mathfrak{o}_{4k}$ is an integral class if $k$ is odd, while if $k$ is even, it is a pair consisting of an integral class and a mod 2 class. If $k$ is odd, then $$\sum_{i+j = 2k} (-1)^i c_ic_j - (-1)^k p_k = 4 \mathfrak{o}_{2n},$$ while for $k$ even, this same formula holds with $\mathfrak{o}_{2n}$ replaced by its integral component\footnote{The integral component is only well-defined once a choice of isomorphism between $\pi_{2n-1}SO(2n)/U(n)$ and $\ZZ \oplus \ZZ_2$ is made. However, up to sign as usual, $4\mathfrak{o}_{2n}$ does not depend on this choice.}. In the formula, $c_0, \ldots, c_{n-1}$ are the Chern classes of the induced $U(n)$ bundle over the $(2n-1)$--skeleton of $X$, while $c_n$ denotes the Euler class of $\xi$.\end{thm}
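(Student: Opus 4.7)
The plan is to imitate the warm-up \Cref{warmupThmII} at the $(4k-2)^{\text{nd}}$ Moore--Postnikov stage of $BU(2k) \to BSO(4k)$. Let $h : BU(2k) \to E_{4k-2}$ be the Moore--Postnikov map; since $h^*$ is an isomorphism on $H^{\leq 4k-2}(-;\mathbb{Z})$, there exist unique integral classes $\gamma_i \in H^{2i}(E_{4k-2};\mathbb{Z})$ with $h^*\gamma_i = c_i$ for $i = 1, \ldots, 2k-1$. Setting $\gamma_0 = 1$ and $\gamma_{2k} := e$ (the Euler class pulled back from $BSO(4k)$, an integral lift of $w_{4k}$), define
\[
\mu := \sum_{i+j=2k}(-1)^i \gamma_i\gamma_j - (-1)^k p_k \;\in\; H^{4k}(E_{4k-2};\mathbb{Z}),
\]
so that the goal becomes to produce a class $\delta$ with $4\delta = \mu$ and to identify $\delta$ up to sign with the integral component of the $k$-invariant $\kappa$ of the Moore--Postnikov system.

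First I would apply the Pontryagin square formula used in \Cref{integralSW} (with $m=k$) at the stage $E_{4k-2}$. Since $\rho_2(\gamma_i) = w_{2i}$ for $i \leq 2k-1$ and $\rho_2(e) = w_{4k}$, \Cref{pontryaginsquarelemma} combined with the Wu formula gives
\[
\rho_4\Bigl(\gamma_k^2 - p_k - 2e - 2\sum_{j=1}^{k-1}\gamma_j\gamma_{2k-j}\Bigr) = 0.
\]
Rewriting $\mu$ as $2e + (-1)^k(\gamma_k^2 - p_k) + 2\sum_{i=1}^{k-1}(-1)^i\gamma_i\gamma_{2k-i}$ and using $-2 \equiv 2 \pmod 4$, a short computation reduces $\rho_4(\mu)$ to $\rho_4(4e) + \rho_4\bigl(4\sum_{i \text{ even}} \gamma_i\gamma_{2k-i}\bigr) = 0$, producing a class $\delta \in H^{4k}(E_{4k-2};\mathbb{Z})$ with $4\delta = \mu$ (well-defined modulo 2-torsion). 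Pulling $\mu$ back to $BU(2k)$ and invoking the universal identity $p_k = (-1)^k\sum_{i+j=2k}(-1)^i c_ic_j$ on the universal complex bundle gives $h^*\mu = 0$; since $H^{4k}(BU(2k);\mathbb{Z})$ is torsion-free, $h^*\delta = 0$.

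The main step is then to identify $\delta$ with $\kappa$ up to sign. I would use the Serre long exact sequence for the principal fibration $K(\pi_{4k-1}SO(4k)/U(2k), 4k-1) \to E_{4k-1} \to E_{4k-2}$, combined with the fact that $H^{4k}(E_{4k-1};\mathbb{Z})$ is torsion-free (since $BU(2k) \to E_{4k-1}$ is an isomorphism on $H^{\leq 4k-1}$ and an injection on $H^{4k}$), to identify the kernel of $h^*$ in degree $4k$ with $\mathbb{Z}\langle\kappa\rangle$ (for $k$ odd); hence $\delta = m\kappa$ for some integer $m$. To pin down $m = \pm 1$, I would test on $S^{4k}$ with its tangent bundle: stable triviality gives $p_k = 0$ and $c_i = 0$ for $1 \leq i \leq 2k-1$, while $e = \chi(S^{4k}) = 2$, so $\mu|_{S^{4k}} = 4$; since $S^{4k}$ admits no almost complex structure for $k \geq 1$ (as $p_k = 0$ would contradict the Chern--Pontryagin relation $p_k = (-1)^k \cdot 2e$ forced by any almost complex structure with $c_i = 0$ for $i<2k$), $\mathfrak{o}$ is a non-zero integer, and $4 = 4m\mathfrak{o}$ forces $m = \pm 1$. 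Pulling back along the chosen lift $X \to E_{4k-2}$ then yields the integral component of \Cref{theoremII}. For $k$ even the integral part of $\kappa$ is treated identically, while the $\mathbb{Z}_2$ component of $\mathfrak{o}$ is annihilated by multiplication by $4$ and so does not enter the formula. The main obstacle is the combined use of the Serre spectral sequence (to isolate $\ker h^*$ in degree $4k$) and the test-case computation on $S^{4k}$ (to pin down the unit $m$).
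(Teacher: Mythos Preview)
Your proposal is correct and follows essentially the same route as the paper: both arguments use the Wu--Pontryagin square identity to show $\rho_4(\mu)=0$, place the class in the kernel of $e_{4k-1}^*$ via the Serre long exact sequence for $E_{4k-1}\to E_{4k-2}\to K(\pi_{4k-1},4k)$ (the kernel being the cyclic group generated by the $k$-invariant), and then evaluate on the tangent bundle of $S^{4k}$ to pin down the coefficient. The only differences are cosmetic---you divide $\mu$ by $4$ first and then land in the kernel, whereas the paper lands $\mu$ in the kernel first and then shows $\mathfrak{o}$ generates a $\mathbb{Z}$-summand so that the coefficient must be divisible by $4$---and your appeal to the non-existence of an almost complex structure on $S^{4k}$ is superfluous, since $4m\,\mathfrak{o}|_{S^{4k}}=4$ in $H^{4k}(S^{4k};\mathbb{Z})\cong\mathbb{Z}$ already forces $m\,\mathfrak{o}|_{S^{4k}}=1$ and hence $m=\pm1$ directly.
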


%The integral obstruction $\mathfrak{o}$ to extending the section over the $4k$--skeleton satisfies $$\sum_{i=0}^{2k} (-1)^i c_ic_{2k-i} - (-1)^k p_k = 4\mathfrak{o}.$$ 

\begin{proof} Recall equation \eqref{psquare} involving the Pontryagin square:

$$\mathfrak{P}(w_{2k}) = \rho_4(p_k) + \theta_2\left( \sum_{i=0}^{k-1}w_{2i}w_{4k-2i} \right).$$

As argued in \Cref{integralSW}, we have that $\theta_2( \sum_{i=0}^{k-1}w_{2i}w_{4k-2i})$ is the mod 4 reduction of $2\sum_{i=0}^{k-1} c_i c_{2k-i}$, or equivalently, the mod 4 reduction of $\sum_{i=0}^{2k}(-1)^i c_i c_{2k-i} - (-1)^k c_k^2$. Since $\mathfrak{P}(w_{2k}) = \rho_4(c_k^2)$ by \Cref{pontryaginsquarelemma}, we conclude that \begin{align*} \rho_4\left( \sum_{i=0}^{2k} (-1)^i c_ic_{2k-i} - (-1)^k p_k \right) &= \rho_4\left( \sum_{i=0}^{2k} (-1)^i c_ic_{2k-i} - (-1)^k c_k^2 \right) + \rho_4\left( (-1)^kc_k^2 - (-1)^k p_k \right) \\ &= \theta_2\left( \sum_{i=0}^{k-1}w_{2i}w_{4k-2i}\right) + (-1)^k \mathfrak{P}(w_{2k}) - (-1)^k\rho_4(p_k)\\ &= (-1)^k\left[\mathfrak{P}(w_{2k}) - \rho_4(p_k) + (-1)^k\theta_2\left( \sum_{i=0}^{k-1}w_{2i}w_{4k-2i}\right)\right].\end{align*} Since $\theta_2$ is the map on cohomology induced by the map $\ZZ_2 \to \ZZ_4$ sending $[1]$ to $[2]$, we have $\theta_2 = -\theta_2$. Therefore the righthand side in the above equation vanishes regardless of the parity of $k$.

Now, the $k$-invariant $E_{4k-2} \xrightarrow{\kappa} K(\pi_{4k-1} SO(4k)/U(2k), 4k)$ is either an integral cohomology class, or a pair consisting of an integral cohomology class and a mod 2 cohomology class. In either case, let $\mathfrak{o}$ denote the integral component. Consider the Serre long exact sequence in integral cohomology for the fibration $E_{4k-1} \xrightarrow{e_{4k-2}} E_{4k-2} \xrightarrow{\kappa} K(\pi_{4k-1} SO(4k)/U(2k), 4k)$, namely, $$ H^{4k}(K(\pi_{4k-1} SO(4k)/U(2k), 4k);\ZZ) \xrightarrow{\kappa^*} H^{4k}(E_{4k-2}; \ZZ) \xrightarrow{e_{4k-2}^*} H^{4k}(E_{4k-1}; \ZZ).$$ Note that $H^{4k}(K(\ZZ_2, 4k); \ZZ) = 0$, and hence regardless of the parity of $k$, we have $$H^{4k}(K(\pi_{4k-1} SO(4k)/U(2k), 4k);\ZZ) \cong H^{4k}(K(\ZZ, 4k);\ZZ) \cong \ZZ ,$$ generated by the class $\iota_{4k}$ generating $H^{4k}(K(\ZZ, 4k);\ZZ)$. From the long exact sequence we thus see that the kernel of $e_{4k-2}^*$ on $H^{4k}(-;\ZZ)$ is the subgroup of $H^{4k}(E_{4k-2}; \ZZ)$ generated by $\kappa^*\iota_{4k}$, which we denote by $\mathfrak{o}$ (this class pulls back to $\mathfrak{o}_{4k}$ on $X$).

Since the map $BU(2k) \to E_{4k-1}$ induces an injection on $H^{4k}(-;\ZZ)$, and the class $\sum_{i=0}^{2k} (-1)^i c_ic_{2k-i} - (-1)^k p_k \in H^{4k}(E_{4k-2}; \ZZ)$ pulls back to be 0 in $H^{4k}(BU(2k);\ZZ)$, we have $$\sum_{i=0}^{2k} (-1)^i c_ic_{2k-i} - (-1)^k p_k \in \ker(e_{4k-2}^*),$$ and hence there is some integer $\ell$ such that $$\sum_{i=0}^{2k} (-1)^i c_ic_{2k-i} - (-1)^k p_k = \ell \mathfrak{o}.$$

Before proceeding, we show that $\mathfrak{o}$ generates a $\ZZ$ summand in $H^{4k}(E_{4k-2};\ZZ)$. First, if there were a non-zero integer $m$ such that $m\mathfrak{o} = 0$, then we would have $m (\sum_{i=0}^{2k} (-1)^i c_ic_{2k-i} - (-1)^k p_k) = 0 \in H^{4k}(E_{4k-2}; \ZZ)$. Now consider the classifying map for the tangent bundle of the sphere $S^{4k} \to BSO(4k)$. Since $S^{4k}$ can be built with one 0--cell and one $4k$--cell, this map lifts to $E_{4k-2}$. Pulling back  $\sum_{i=0}^{2k} (-1)^i c_ic_{2k-i} - (-1)^k p_k$ to $S^{4k}$ yields twice the Euler class ($p_k$ is trivial as the sphere is stably parallelizable), i.e. the element $4 \in H^{4k}(S^{4k};\ZZ) \cong \ZZ$. If $m((\sum_{i=0}^{2k} (-1)^i c_ic_{2k-i} - (-1)^k p_k) = 0$, then $4m = 0$, a contradiction. Secondly, we observe that $\mathfrak{o}$ is a primitive class. Suppose that $\mathfrak{o} = m\alpha$, for some $\alpha \in H^{4k}(E_{4k-2}; \ZZ)$. Then, as $\mathfrak{o}$ is non-zero by the previous argument, $\alpha$ would pull back to be a non-zero class of order $m$ in $H^{4k}(BU(2k);\ZZ)$. Since the integral cohomology of $BU(2k)$ is torsion-free, we conclude that $m = \pm 1$.

Since the mod 4 reduction of $\sum_{i=0}^{2k} (-1)^i c_ic_{2k-i} - (-1)^k p_k$ is zero, we conclude that $$\sum_{i=0}^{2k} (-1)^i c_ic_{2k-i} - (-1)^k p_k = 4m\mathfrak{o}$$ for some integer $m$. Pulling back again to the sphere $S^{4k}$ we see that $4m\mathfrak{o}_{4k} = 4 \in H^{4k}(S^{4k};\ZZ)$, so $m = \pm 1$ (as $\mathfrak{o}_{4k}$ is only defined up to sign). \end{proof}

%$m = \pm 1$; indeed, otherwise, $$\sum_{i=0}^{2k} (-1)^i c_ic_{2k-i} - (-1)^k p_k = 4 \in H^{4k}(S^{4k};\ZZ)$$ would be divisible by a number greater than 4. 

\begin{ex} Consider the quaternionic projective plane $\mathbb{HP}^2$, and suppose we have a reduction of structure group to $U(4)$ of its tangent bundle over its seven--skeleton. To see that such a reduction indeed exists, first note that the homotopy groups of $SO(8)/U(4)$ in degrees one through six are isomorphic to those of $SO/U$, which are $0, \ZZ, 0, 0, 0, \ZZ$. By \Cref{TheoremI}, the two obstructions we encounter are $W_3$ and $W_7$, which vanish for degree reasons. 

Now, \Cref{theoremII} tells us that the obstruction to extending this over the eight--skeleton satisfies $2e + c_2^2 - p_2 = 4\mathfrak{o_8}$. First of all, the Pontryagin classes of $\mathbb{HP}^2$ are obtained from the formula $1 + p_1 + p_2 = \frac{(1+u)^6}{1+4u}$, where $u$ denotes a particular choice of generator for $H^4(\mathbb{HP}^2;\ZZ)$; see \cite[\textsection 2]{M62}. This yields $p_1 = 2u$ and $p_2 = 7u^2$. From the general identity $p_1 = c_1^2 - 2c_2$, since $c_1$ necessarily vanishes, we conclude $c_2 = -u$. We therefore have $$\langle 2e + c_2^2 - p_2, [\mathbb{HP}^2] \rangle = 0,$$ and since $H^8(\mathbb{HP}^2;\ZZ)$ is torsion-free, we conclude $\mathfrak{o}_8 = 0$. This same calculation holds for $\overline{\mathbb{HP}^2}$. We emphasize that $\mathbb{HP}^2$ and $\overline{\mathbb{HP}^2}$ do not admit almost complex structures, as proved in \cite{M62}. In fact, Massey shows that $\mathbb{HP}^n$ does not even admit a stable almost complex structure, for all $n \geq 2$. The point here is that the integral part of the obstruction to extending the almost complex structure over the eight--skeleton of $\mathbb{HP}^2$ vanishes, but the $\ZZ_2$ part does not. \Cref{theoremII} does not detect the latter. For some further discussion on this, see Section 6 below. \end{ex}

We now address the second obstruction to reducing the structure group of an $SO(6)$ bundle to $U(3)$. 

\begin{proposition}\label{secondobstruction2n=6} The second obstruction $\mathfrak{o}$ to reducing the structure group of an $SO(6)$ bundle to $U(3)$ satisfies $$-2c_1c_3 + c_2^2 - p_2 = 4\mathfrak{o}.$$ \end{proposition}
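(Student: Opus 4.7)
The plan is to follow the template of the proof of \Cref{theoremII}, applied to the Moore--Postnikov system for $BU(3) \to BSO(6)$. The fiber $SO(6)/U(3) \cong \mathbb{CP}^3$ has first non-trivial homotopy groups $\pi_2 \cong \ZZ$ and $\pi_7 \cong \pi_7 S^7 = \ZZ$ (via the Hopf fibration $S^1 \to S^7 \to \mathbb{CP}^3$), with $\pi_3, \ldots, \pi_6$ all trivial. Hence stages three through seven of the Moore--Postnikov system are homotopy equivalences, and the relevant second $k$--invariant $\mathfrak{o}$ lives in $H^8(E_7; \ZZ)$. Since $BU(3) \to E_7$ is an isomorphism on $H^{\leq 7}(-;\ZZ)$ and injective on $H^8(-;\ZZ)$, the Chern classes $c_1, c_2, c_3$ pull back uniquely to classes on $E_7$, with $c_3$ agreeing with the pullback of the Euler class from $BSO(6)$; the Pontryagin class $p_2$ is likewise defined on $E_7$ by pullback.

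On $BU(3)$ the relation $(-1)^k p_k = \sum_{i+j=2k}(-1)^i c_i c_j$ together with $c_4 = 0$ gives $p_2 = c_2^2 - 2c_1 c_3$, so $c_2^2 - 2c_1 c_3 - p_2 \in H^8(E_7;\ZZ)$ lies in the kernel of the restriction to $H^8(BU(3);\ZZ)$. Combining the Serre long exact sequence for $E_8 \to E_7 \to K(\ZZ, 8)$ with the injectivity of $H^8(E_8;\ZZ) \to H^8(BU(3);\ZZ)$, this kernel is generated by $\mathfrak{o}$, so $c_2^2 - 2c_1 c_3 - p_2 = \ell\, \mathfrak{o}$ for some integer $\ell$.

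I would then apply the Pontryagin square formula of \Cref{integralSW} with $m = 2$, which for a rank six oriented bundle (so $w_8 = 0$) reads $\mathfrak{P}(w_4) = \rho_4(p_2) + \theta_2(w_2 w_6)$. By \Cref{pontryaginsquarelemma} the left side equals $\rho_4(c_2^2)$, and the identity $\theta_2(\sum w_{2j} w_{4m-2j}) = \rho_4(2 \sum c_j c_{2m-j})$ extracted from the proof of \Cref{integralSW} gives $\theta_2(w_2 w_6) = \rho_4(2c_1 c_3)$, using $c_1, c_3$ as integral lifts of $w_2, w_6$. Hence $\rho_4(c_2^2 - 2c_1 c_3 - p_2) = 0$, so once $\mathfrak{o}$ is known to be non-torsion and primitive in $H^8(E_7;\ZZ)$ we conclude $4 \mid \ell$. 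These two properties follow verbatim from the last two paragraphs of the proof of \Cref{theoremII}: primitivity because $H^*(BU(3);\ZZ)$ is torsion-free, and non-torsion by pulling the identity back to a rank six bundle on $S^8$ whose class $p_2$ is a non-zero multiple of the orientation generator. Writing $\ell = 4m$ and evaluating on such a bundle pins down $m = \pm 1$, giving the claimed formula up to the customary sign.

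The main obstacle I anticipate is the last step, namely producing a concrete rank six orientable real bundle over $S^8$ on which both $p_2$ and the primary obstruction $\mathfrak{o}_8$ to a $U(3)$--lift can be independently computed, so that the coefficient of $\mathfrak{o}$ is fixed to $\pm 4$ rather than a larger multiple of four. This reduces to tracking a generator of $\pi_7 SO(6)$ under the quotient map to $\pi_7 SO(6)/U(3) \cong \ZZ$ and comparing it with the Pontryagin number in that homotopy class.
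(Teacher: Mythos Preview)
Your outline is a direct rerun of the proof of \Cref{theoremII}, and it is correct up to the final evaluation step you flag. (There is an off-by-one slip in your indexing: since $\pi_7(SO(6)/U(3))\cong\ZZ$ is nonzero, the map $E_7\to E_6$ is \emph{not} an equivalence; the second $k$--invariant lives in $H^8(E_6;\ZZ)$ with $E_6\simeq E_2$, and the relevant fibration is $E_7\to E_6\to K(\ZZ,8)$, not $E_8\to E_7\to K(\ZZ,8)$. This does not affect the substance of your argument.)

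The paper takes a different route that sidesteps your obstacle completely. Rather than rerunning the argument, it reduces to \Cref{theoremII} (case $k=2$) by naturality of Moore--Postnikov systems. The key geometric input is that the fibration $SO(6)/U(3)\to SO(8)/U(4)\to S^6$ admits a section, because $S^6$ carries an almost complex structure; hence the long exact sequence in homotopy splits and the inclusion sends $\pi_7 SO(6)/U(3)\cong\ZZ$ isomorphically onto the free summand of $\pi_7 SO(8)/U(4)\cong\ZZ\oplus\ZZ_2$. Comparing the Moore--Postnikov systems of $BU(3)\to BSO(6)$ and $BU(4)\to BSO(8)$ as in the discussion surrounding diagram~(\ref{big}), the induced map $K(\ZZ,7)\to K(\ZZ\oplus\ZZ_2,7)$ on fibers realises this inclusion, and naturality of the transgression identifies the $k$--invariant for the $SO(6)$ problem with the pullback of the integral component of $\mathfrak{o}_8$ from \Cref{theoremII}. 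Four times that class is $2e-2c_1c_3+c_2^2-p_2$; the rank-eight Euler class restricts to zero along $BSO(6)\to BSO(8)$, giving the formula.

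The payoff is that the test-bundle computation you are worried about is already done in \Cref{theoremII} using $TS^{8}$, which has rank eight; naturality lets one inherit that calculation without ever producing a rank six bundle on $S^8$ or computing $p_2$ on a generator of $\pi_7 SO(6)$. Your direct route would work, but closing it requires exactly the homotopy-theoretic bookkeeping you anticipate, whereas the paper's reduction gets the constant $4$ for free.
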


\begin{proof} First of all, from the well known identification of $SO(6)/U(3)$ with $\mathbb{CP}^3$, we see that after $\pi_2$, the first non-zero homotopy group is $\pi_7 SO(6)/U(3) \cong \mathbb{Z}$. 

Next, consider the fiber bundle $SO(6)/U(3) \to SO(8)/U(4) \to S^6$. Since $S^6$ admits an almost complex structure, this bundle has a section, and hence the long exact sequence in homotopy groups splits. Therefore, $\pi_7 SO(6)/U(3) \cong \ZZ$ maps isomorphically onto the $\ZZ$ summand of $\pi_7 SO(8)/U(4) \cong \ZZ \oplus \ZZ_2$. 

Now we look at the map of Moore--Postnikov systems induced by the map of fibrations $$\begin{tikzcd}
SO(6)/U(3) \arrow[d] \arrow[r] & SO(8)/U(4) \arrow[d] \\
BU(3) \arrow[d] \arrow[r]      & BU(4) \arrow[d]      \\
BSO(6) \arrow[r]               & BSO(8)              
\end{tikzcd}$$

The relevant part for us will be the following map of fibrations:

$$\begin{tikzcd}
{K(\mathbb{Z}, 7)} \arrow[d] \arrow[r] & {K(\mathbb{Z}\oplus \mathbb{Z}_2, 7)} \arrow[d] \\
E_7^1 \arrow[d] \arrow[r]                & E_7^2 \arrow[d]                                  \\
E_6^1 \arrow[r]                          & E_6^2                                           
\end{tikzcd}$$

Arguing as in the last part of \Cref{theoremIsection}, involving diagram \eqref{r}, we see that the map $K(\ZZ, 7) \to K(\ZZ \oplus \ZZ_2, 7)$ induces the inclusion of the $\ZZ$ summand on $\pi_7$, as $SO(6)/U(3) \to SO(8)/U(4)$ does. Therefore, by naturality of the transgression for the Serre spectral sequence with $\ZZ$ coefficients again, and since $H^8(K(\ZZ, 8);\ZZ) \cong H^8(K(\ZZ\oplus \ZZ_2, 8); \ZZ)$ induced by the inclusion $\ZZ \hookrightarrow \ZZ \oplus \ZZ_2$ of the left summand, the $k$-invariant $E_6 \to K(\ZZ,8)$ is the pullback of the class $\mathfrak{o}_8$ in \Cref{theoremII}. In particular, four times the $k$-invariant is the pullback of $2e - 2c_1c_3 + c_2^2 - p_2$. Since the Euler class $e$ of $BSO(8)$ pulls back to zero in $BSO(6)$, we have that four times the $k$-invariant (i.e. four times the second obstruction $\mathfrak{o}$) is $-2c_1c_3 + c_2^2 - p_2$. \end{proof}

\begin{ex}
There are examples of $SO(6)$ bundles over eight--manifolds for which the first obstruction to reducing the structure group to $U(3)$ vanishes, but the second obstruction does not -- note, over an eight-dimensional base, the second obstruction is also the last obstruction. For example, by \cite[Lemma B.1]{BK03}, there exists an orientable rank 6 bundle $E \to S^8$ such that $p_2 \neq 0$. Note that the first obstruction, namely $W_3$, vanishes as $H^3(S^8; \mathbb{Z}) = 0$. Due to the cohomology of $S^8$, the second obstruction $\mathfrak{o}$ satisfies $4\mathfrak{o} = -p_2 \neq 0$, so $\mathfrak{o} \neq 0$.
\end{ex}

\section{Remarks on Massey's Theorem III}

In \cite[Theorem III]{M61}, the degree eight obstruction $\mathfrak{o}_8 \in H^8(-;\mathbb{Z}_2)$ to a stable almost complex structure is discussed. Concretely, suppose we have an $SO(2n)$-bundle over a CW complex $X$, such that $n\geq 5$, with a section $s$ of the associated $SO(2n)/U(n)$-bundle over the seven--skeleton of $X$. Denoting the total space of the $SO(2n)/U(n)$-bundle by $E$, there are classes $u \in H^2(E;\mathbb{Z})$ and $v \in H^6(E;\mathbb{Z})$ such that $s^*u = 0$, $s^*v = 0$, and $u,v$ restrict to generators $x,y$ of the degree $\leq 8$ cohomology of $SO(2n)/U(n)$.  The obstruction $\mathfrak{o}_8$ is then claimed to be a certain class $b_8$ pulled back from $X$ that appears in the expansion of $Sq^2\rho_2(v)$ in terms of the cohomology of $X$ and $SO(2n)/U(n)$. Namely, 

$$Sq^2\rho_2(v) = p^*b_8 + (p^*b_6)\rho_2(u) + (p^*b_4)\rho_2(u)^2 + (p^*b_2)\rho_2(u)^3 + (p^*b_2')\rho_2(v) + \rho_2(u)^4.$$

Such an equation holds since $x$ and $y$ transgress to $W_3$ and $W_7$, which are assumed to vanish, and hence the spectral sequence collapses in sufficiently low degree.

We have the relation $Sq^2\rho_2(y) = \rho_2(x)^4$, which can be seen by combining \cite[Theorem 6.11]{MT} and \cite[Theorem 6.6]{MT}. It is clear that the vanishing of the obstruction $\mathfrak{o}_8$ implies the vanishing of $b_8$. Indeed, if the obstruction vanishes, then one can extend $s$ over the eight--skeleton of $X$, in which case we can apply the pull back by this extended section to the above equation, obtaining $b_8 = 0$ since $s^*u = s^*v = 0$.

In the cases of most interest, namely stable almost complex structures on closed eight--manifolds, this obstruction was addressed by Heaps \cite{H60} and Thomas \cite{T67}. In \cite{H60}, the obstruction itself is not explicitly identified, but it is shown (making use of another theorem of Massey that $W_7=0$ for the tangent bundle of a closed orientable eight--manifold) that a closed orientable eight--manifold admits a stable almost complex structure if and only if $W_3 = 0$ and $w_8$ is in the image of $Sq^2\rho_2$ \cite[Corollary 1.3]{H60}. Thomas \cite[Theorem 1.2]{T67} showed that a general real orientable vector bundle $\xi$ over a CW complex admits a stable almost complex structure if and only if $W_3(\xi) = 0, W_7(\xi) = 0$, and a certain secondary cohomology operation $\Omega$ vanishes on $w_8(\xi) + w_4(\xi)^2 + w_2(\xi)^2w_4(\xi)$.


\begin{thebibliography}{9999999}
\bibitem[Ba06]{Ba} Baues, H.J., 2006. \emph{Obstruction theory: On homotopy classification of maps} (Vol. 628). Springer.

\bibitem[BK03]{BK03} Belegradek, I. and Kapovitch, V., 2003. \emph{Obstructions to nonnegative curvature and rational homotopy theory}. Journal of the American Mathematical Society, 16(2), pp.259-284.

\bibitem[BS53]{BS53} Borel, A. and Serre, J.P., 1953. \emph{Groupes de Lie et puissances r\'eduites de Steenrod}. American Journal of Mathematics, pp.409-448.

\bibitem[BT62]{BT62} Browder, W. and Thomas, E., 1962. \emph{Axioms for the generalized Pontryagin cohomology operations}. The Quarterly Journal of Mathematics, 13(1), pp.55-60.

\bibitem[B82]{B82} Brown Jr, E.H., 1982. \emph{The cohomology of $BSO(n)$ and $BO(n)$ with integer coefficients}. Proceedings of the American Mathematical Society, Vol. 85, No. 2, pp. 283-288.

\bibitem[D18]{D18} Duan, H., 2018. \emph{The characteristic classes and Weyl invariants of Spinor groups}. arXiv preprint arXiv:1810.03799.

\bibitem[E50]{E50} Ehresmann, C., 1950, September. \emph{Sur les variétés presque complexes}. In the Proceedings of the International Congress of Mathematicians, Cambridge, Massachusetts (Vol. 2, pp. 412-419). %Section 6 for W7

\bibitem[E52]{E52} Ehresmann, C., 1952. \emph{Sur les var\'et\'es presque complexes}. In: Proceedings of the International Congress of Mathematicians, Cambridge, Mass., Aug. 30–Sept. 6, 1950. Vol. 2. pp. 412–419.

\bibitem[Ha63]{Ha63} Harris, B., 1963. \emph{Some calculations of homotopy groups of symmetric spaces}. Transactions of the American Mathematical Society, 106(1), pp.174-184.

\bibitem[H60]{H60} Heaps, T., 1970. \emph{Almost complex structures on eight-and ten-dimensional manifolds}. Topology, 9(2), pp.111-119.

\bibitem[He59]{He59} Hermann, R., 1959. \emph{Secondary obstructions for fibre spaces}. Bulletin of the American Mathematical Society, 65(1), pp.5-8.

\bibitem[He60]{He60} Hermann, R., 1960. \emph{Obstruction theory for fibre spaces}. Illinois Journal of Mathematics, 4(1), pp.9-27.

\bibitem[HiHo58]{HiHo58} Hirzebruch, F. and Hopf, H., 1958. \emph{Felder von Flächenelementen in 4-dimensionalen Mannigfaltigkeiten}. Mathematische Annalen, 136, pp.156-172.

\bibitem[Ka63]{Ka63} Kahn, D.W., 1963. \emph{Induced maps for Postnikov systems}. Transactions of the American Mathematical Society, 107(3), pp.432-450.

\bibitem[K59]{K59} Kervaire, M.A., 1959. \emph{A note on obstructions and characteristic classes}. American Journal of Mathematics, 81(3), pp.773-784.

\bibitem[M61]{M61} Massey, W.S., 1961. \emph{Obstructions to the existence of almost complex structures}. Bulletin of the American Mathematical Society, 67(6), pp.559-564.

\bibitem[M62]{M62} Massey, W.S., 1962. \emph{Non-existence of almost complex structures on quaternionic projective spaces}. Pacific Journal of Mathematics, 12, pp.1379–1384.

\bibitem[May67]{M} May, J.P., 1967. \emph{Simplicial objects in algebraic topology} (Vol. 11). University of Chicago Press.

\bibitem[MT91]{MT} Mimura, M. and Toda, H., 1991. \emph{Topology of Lie groups, I and II} (Vol. 91). American Mathematical Soc.

\bibitem[N10]{N10} Neisendorfer, J., 2010. \emph{Algebraic methods in unstable homotopy theory} (Vol. 12). Cambridge University Press.

\bibitem[St51]{St} Steenrod, N., 1951. \emph{The Topology of Fibre Bundles}, Volume 27. Princeton University Press.

\bibitem[S65]{S65} Sutherland, W.A., 1965. \emph{A note on almost complex and weakly complex structures}. Journal of the London Mathematical Society, 1(1), pp.705-712.

\bibitem[TV94]{TV} Teichner, P. and Vogt, E., 1994. \emph{All 4-manifolds have spin$^c$ structures}. Unpublished note, available from \texttt{https://people.mpim-bonn.mpg.de/teichner/Math/ewExternalFiles/spin.pdf}.

\bibitem[T60]{T60} Thomas, E., 1960. \emph{On the cohomology of the real Grassmann complexes and the characteristic classes of $n$-plane bundles}. Transactions of the American Mathematical Society, 96(1), pp.67-89.

\bibitem[T67]{T67} Thomas, E., 1967. \emph{Complex structures on real vector bundles}. American Journal of Mathematics, 89(4), pp.887-908.

\bibitem[Wh12]{Wh} Whitehead, G.W., 2012. \emph{Elements of homotopy theory} (Vol. 61). Springer Science \& Business Media.

\bibitem[W52]{W52} Wu, W.T., 1952. \emph{Sur les classes caract\'eristiques des structures fibr\'ees sph\'eriques}. Actualités Sci. Ind, 1183, pp.1-89.

\end{thebibliography}
\end{document}